\renewcommand\section{\@startsection {section}{1}{\z@}%
             {-3.5ex \@plus -1ex \@minus -.2ex}%
             {2.3ex \@plus.2ex}%
             {\normalfont\large\bfseries}}
\renewcommand\subsection{\@startsection{subsection}{2}{\z@}%
             {-3.25ex\@plus -1ex \@minus -.2ex}%
             {1.5ex \@plus .2ex}%
             {\normalfont\normalsize\bfseries}}
\renewcommand\subsubsection{\@startsection{subsubsection}{3}{\z@}%
             {-3.25ex\@plus -1ex \@minus -.2ex}%
             {1.5ex \@plus .2ex}%
             {\normalfont\normalsize\bfseries}}
\newcommand{\MF}[1]{\mathfrak{#1}}
\newcommand{\Z}{\mathbb{Z}}
\newcommand{\Q}{\mathbb{Q}}
\newcommand{\R}{\mathbb{R}}
\newcommand{\C}{\mathbb{C}}
\renewcommand{\P}{\mathbb{P}}
\newcommand{\rank}{\operatorname{rank}}
\newcommand{\SL}{\mathop{\mathrm{SL}}\nolimits}
\newcommand{\SO}{\mathop{\mathrm{SO}}\nolimits}
\newcommand{\GL}{\mathop{\mathrm{GL}}\nolimits}
\newcommand{\PGL}{\mathop{\mathrm{PGL}}\nolimits}
\newcommand{\TQ}[6]
 {\left(\begin{smallmatrix}
   #1 & #6 & #5 \\
   #6 & #2 & #4 \\
   #5 & #4 & #3
  \end{smallmatrix}\right)}
\newcommand{\II}{\text{II}}
\newcommand{\sym}[1]{{#1_{s}}}
\newcommand{\nsym}[1]{#1}
\newcommand{\disc}[1]{{#1}^\vee  / #1}
\DeclareMathOperator{\Aut}{Aut}
\DeclareMathOperator{\NS}{NS}
\DeclareMathOperator{\End}{End}
\renewcommand{\H}{\operatorname{H}}
\newcommand{\syminv}{\Lambda^{\sym{G}}}
\newcommand{\symco}{\Lambda_{\sym{G}}}
\newcommand\Tstrut{\rule{0pt}{2.6ex}}
\title{On K3 surfaces with maximal symplectic action.}
\title{Extensions of maximal symplectic actions on K3 surfaces}
\author{Simon Brandhorst and Kenji Hashimoto}
\date{\today}
\begin{document}
\theoremstyle{plain}
\newtheorem{thm}{Theorem}[section]
\newtheorem{lem}[thm]{Lemma}
\newtheorem{cor}[thm]{Corollary}
\newtheorem{prop}[thm]{Proposition}
\newtheorem{clm}[thm]{Claim}
\newtheorem{ex}[thm]{Example}
\newtheorem{nota}[thm]{Notation}
\theoremstyle{definition}
\newtheorem{definition}[thm]{Definition}
\newtheorem{rem}[thm]{Remark}
\maketitle

\begin{abstract}\noindent
We classify pairs $(X,G)$ consisting of a complex K3 surface $X$ and a finite group $G~\leq~\Aut(X)$ such that the subgroup $G_s \lneq G$ consisting of symplectic automorphisms is among the $11$ maximal symplectic ones as classified by Mukai.
\end{abstract}
\section{Introduction}
A (complex) \emph{K3 surface} is a compact, complex manifold $X$ of dimension $2$ which is simply connected and admits a no-where degenerate holomorphic symplectic form $\sigma_X \in \H^0(X,\Omega^2_X)$ unique up to scaling.
An automorphism of a K3 surface is called \emph{symplectic} if it leaves the $2$-form invariant and non-symplectic else.
Finite groups of symplectic automorphisms of K3 surfaces were classified by Mukai up to isomorphism of groups.
Namely, a group acts faithfully and symplectically on some complex K3 surface if and only if it admits an embedding into the Mathieu group $M_{24}$ which decomposes the $24$ points into at least $5$ orbits and fixes a point (in particular it is contained in $M_{23}$) \cite{mukai:symplectic,kondo:symplectic}.
This leads to a list of $11$ maximal subgroups (with $5$ orbits) among the subgroups of $M_{24}$ meeting these conditions.
A finer classification, namely up to equivariant deformation, was obtained in \cite{hashimoto:symplectic}. There are $14$ maximal finite symplectic group actions (see Table \ref{tbl:symplectic}).

However not every automorphism of a K3 surface is symplectic.
Let $X$ be a K3 surface and $G \leq \Aut(X)$ a group of automorphisms. We remark that $G$ is finite if and only if there is an ample class on $X$ invariant under $G$. Denote by $\sym{G}$ the normal subgroup consisting of symplectic automorphisms. Let $G$ be finite. Then we have a natural exact sequence
\[1 \rightarrow \sym{G} \rightarrow \nsym{G} \overset{\rho}{\rightarrow} \mu_n \rightarrow 1\]
where $n \in \{k \in \mathbb{N} \mid \varphi(k) \leq 20 \}$
 and $\varphi$ is the Euler totient function.
The homomorphism $\rho$ is defined by $g^* \sigma_X=\rho(g)\cdot\sigma_X$.
In the present paper, we classify finite groups $\nsym{G}$ of automorphisms of K3 surfaces, under the condition that $\sym{G}$ is among the $11$ maximal groups and $G_s \lneq G$. As it turns out, this forces the underlying K3 surface $X$ to have maximal Picard number $20$, i.e.\ it is a singular K3 surface. In particular it has infinite automorphism group. Moreover, those K3 surfaces (with $G$) are rigid (i.e.\ not deformable). 
Let $(X,\nsym{G})$ and $(X',\nsym{G}')$ be two pairs of K3 surfaces with a group of automorphisms. They are called isomorphic if there is an isomorphism $f \colon X \rightarrow X'$ with $f G f^{-1} = G'$.
\begin{thm}\label{thm:classification}
 Let $X$ be a K3 surface and $\nsym{G}\leq \Aut(X)$ a maximal
 finite group of automorphisms such that the symplectic part
 $\sym{G}$ is isomorphic to one of the $11$ maximal groups
 and $\sym{G} \lneq \nsym{G}$. Then the pair $(X,\nsym{G})$ is
 isomorphic to one of the $42$ pairs listed in Section
 \ref{sec:classification}.
\end{thm}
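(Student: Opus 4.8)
The plan is to translate the statement into lattice theory via the Torelli theorem and then to exploit the very rigid geometry forced by the maximality of $\sym{G}$. The decisive first observation is that each of the $11$ maximal symplectic groups acts on the Mukai lattice $\WT{\H}(X)\cong U^{\oplus 4}\oplus E_8(-1)^{\oplus 2}$ with exactly $5$ orbits, so its invariant sublattice has rank $5$; subtracting the two classes coming from $\H^0\oplus \H^4$ leaves $\rank\syminv=3$. Because $\sym{G}$ is symplectic it fixes pointwise the positive-definite plane spanned by $\operatorname{Re}\sigma_X,\operatorname{Im}\sigma_X$ together with a $\sym{G}$-invariant ample class, so $\syminv$ is in fact positive definite of signature $(3,0)$, and $\symco$ is the negative-definite rank-$19$ coinvariant lattice. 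Since $\symco\subseteq\NS(X)$, the transcendental lattice $T(X)$ embeds into $\syminv$; but a lattice of signature $(2,20-\rho(X))$ can sit inside a positive-definite lattice only when $20-\rho(X)=0$. Hence $\rho(X)=20$, the surface is singular, $T(X)$ is a positive-definite rank-$2$ lattice, and $(X,\nsym{G})$ is rigid. This recovers the claim made just before the theorem and reduces everything to a finite lattice problem.

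Next I would constrain the non-symplectic part. Conjugation makes all of $\nsym{G}$ act on $\syminv$ and on $\symco$, and since $\syminv$ is positive definite of rank $3$ its isometry group is finite. A lift $g$ of a generator of $\mu_n=\nsym{G}/\sym{G}$ acts on $\syminv\otimes\C$ with $\sigma_X$ as an eigenvector for a primitive $n$-th root of unity $\zeta_n$; as $\sym{G}\lneq\nsym{G}$ we have $\zeta_n\neq 1$, so $n>1$. Because $\sigma_X\in T(X)\otimes\C$ with $T(X)$ of rank $2$, the cyclotomic polynomial $\Phi_n$ divides the characteristic polynomial of $g|_{T(X)}$, forcing $\varphi(n)\le 2$; equivalently, the crystallographic restriction for a rank-$2$ positive-definite lattice gives $n\in\{2,3,4,6\}$. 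For each admissible $n$ I would list the isometries $\bar g\in\Ortho(\syminv)$ of order $n$ admitting an isotropic eigenvector $\sigma_X$ with $\sigma_X\cdot\overline{\sigma_X}>0$, recover $T(X)$ as the smallest primitive sublattice of $\syminv$ whose complexification contains $\sigma_X$, and retain only the cases with $T(X)$ of signature $(2,0)$.

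With both actions in hand, the core step is the gluing problem. For each of the $11$ groups I would compute $\symco$ explicitly, its discriminant form $\disc{\symco}$, and the quotient $N_{\Ortho(\symco)}(\sym{G})/\sym{G}$, which governs how a non-symplectic element may conjugate $\sym{G}$. The sought isometry of $\Lambda=\H^2(X,\Z)$ must restrict to the chosen $\bar g$ on $\syminv$ and to a normalizing isometry on $\symco$ whose induced maps on $\disc{\syminv}$ and $\disc{\symco}$ agree under Nikulin's anti-isometry $\disc{\symco}\cong\disc{\syminv}$. Solving this lifting and gluing problem for every candidate is the computational heart of the argument and, together with the bookkeeping needed to verify maximality, the step I expect to be the main obstacle; it is naturally carried out by explicit computation with discriminant forms and finite isometry groups.

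Finally I would return to geometry. Surjectivity of the period map and the strong Torelli theorem produce, from each admissible datum, a singular K3 surface $X$ carrying a faithful $\nsym{G}$-action that realizes the prescribed exact sequence, the $\sym{G}$-invariant ample class being automatic from positive-definiteness of $\syminv$. Among the resulting groups I would keep only those $\nsym{G}$ that are maximal, namely not properly contained in a larger finite automorphism group with the same symplectic part, discard isomorphic duplicates using $\Ortho(\Lambda)$-conjugacy of the lattice data, and match the survivors to the $42$ pairs of Section \ref{sec:classification}.
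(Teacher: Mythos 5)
Your route --- reduce to lattice data, bound $n$ using the rank-$2$ transcendental lattice, glue $\syminv$ and $\symco$ back into $\Lambda$, then apply Torelli and surjectivity of the period map --- is essentially the paper's, and your structural reductions (positive-definite $\syminv$ of rank $3$, $\rho(X)=20$, $n\in\{2,3,4,6\}$) are correct. The first genuine gap is at the very end, where you pass from $\Ortho(\Lambda)$-conjugacy of the lattice data to isomorphism of the pairs $(X,\nsym{G})$. This implication is not formal; it is Proposition \ref{prop:k3fromgroup}, and the delicate case is $n=2$: there the $(-1)$-eigenspace of $g$ on $\syminv\otimes\C$ is two-dimensional and contains \emph{two} isotropic lines, namely the two possible periods (equivalently, the two orientations of $T_X$), and the lattice datum does not distinguish them. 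A priori one conjugacy class could yield two non-isomorphic pairs, a surface and its complex conjugate. The paper closes this with Lemma \ref{lem:orientation-reversal}(2): there is a second involution $f\in\SO(\syminv)$ centralizing $g$ with $\det f|_{H_g}=-1$; it swaps the two candidate periods and extends to $\Lambda$ preserving $G$, so the two choices give isomorphic pairs. Without such an argument neither the completeness of your list nor the absence of duplicates in it is justified. A smaller, related omission: your filter on $\bar g$ must also require an invariant ample class, equivalently $\bar g\in\SO(\syminv)$ (Lemma \ref{lem:conditions}); otherwise $-\mathrm{id}$ on $\syminv$ passes your isotropic-eigenvector test for $n=2$ yet is not realizable by any automorphism.

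The second gap is the enumeration basis. You run over the $11$ abstract groups, but pairs are classified by the conjugacy class of $\sym{G}$ in $\Ortho(\Lambda)$, of which there are $14$: for $\MF{S}_5$, $L_2(7)$ and $\MF{A}_6$ the genus of $\syminv$ contains two isometry classes, hence two non-conjugate actions each (Table \ref{tbl:symplectic}), and this is exactly why the count is $42=3\cdot 14$. Moreover, your plan never says where the lattices $\syminv$ come from --- you only derive their rank and signature --- whereas the whole classification rests on Hashimoto's prior determination of these invariant lattices and on the fact that they determine the conjugacy class of $\sym{G}$. Finally, the step you flag as the main obstacle, lifting $\bar g$ through the glue map, is no obstacle at all in this situation: by the surjectivity of $\Ortho(\symco)\rightarrow \Ortho(\disc{\symco})$ (Lemma \ref{lem:surj}, quoted from earlier work), every isometry of $\syminv$ extends to $\Lambda$ normalizing $\sym{G}$, and the extension is unique, with conjugate restrictions giving conjugate extensions (Lemma \ref{lem:unique-extension}). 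It is this structural fact, not a case-by-case computation, that collapses the problem to listing the three conjugacy classes of maximal cyclic subgroups of the dihedral group $\SO(\syminv)$ for each of the $14$ actions.
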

 The representations of the groups $G$ on the K3 lattices
 $\Lambda\cong \H^2(X,\Z)$ are given in the ancillary
 file to \cite{brandhorst-hashimoto:arxiv} on arXiv.

The proof goes via a classification, up to conjugacy,
of suitable finite subgroups of the orthogonal group of the K3 lattice.
Then the strong Torelli type theorem \cite{shafarevic:torelli,burns-rapoport:1975}
and the surjectivity of the period map \cite{todorov:1980}
abstractly provide the existence and uniqueness of the
pairs $(X,G)$. Note that a K3 surface admitting a non-symplectic
automorphism of finite order must be projective \cite[Thm. 3.1]{nikulin:auto}. Thus,
at least in principle, it is possible to find projective models
of the K3 surfaces and the automorphisms.
For $25$ out of the $42$ pairs $(X,G)$ we list explicit equations in Section \ref{sec:classification}.
Using the Torelli-type theorem Kondo proved in \cite{kondo:maximal} that the maximal order
of a finite group of automorphisms of a K3 surface
equals $3840$. Section \ref{sect:maximal} is devoted to deriving its equations for the first time.

For the full story of symplectic groups of automorphisms we recommend the excellent survey \cite{kondo:symplectic_survey}.
Non-symplectic automorphisms of prime order are treated in \cite{artebani_sarti_taki:non-symplectic}.
In \cite{frantzen:automorphisms}, a similar
classification with different methods is carried out,
albeit under the restrictive condition that
$\nsym{G} = \sym{G} \times \mu_2$ and the group $\mu_2$ has fixed points.
Note that the author misses cases \textbf{70d} and \textbf{76a} (see Section \ref{sec:classification}).

\begin{rem}
 Let $(X,\nsym{G})$ be as in Theorem \ref{thm:classification}.
 It turns out that the non-symplectic part $\nsym{G}/\sym{G}\cong \mu_n$
 is of even order and the pair $(X,G)$ is determined up to isomorphism already
 by $\sym{G}$ and any involution in $\nsym{G}/\sym{G}$. See section \ref{sect:extensions} for details.
\end{rem}

\noindent \textbf{Open problems}

\noindent
We close this section with some interesting problems concerning  groups of automorphisms of K3 surfaces.
\begin{enumerate}
\itemsep0pt
 \item Find the remaining $17$ missing equations among the $42$ K3 surfaces and their automorphisms.
 \item Give generators of the full automorphism group of the corresponding K3 surfaces. Since a Conway chamber in the nef cone of this surface has large symmetry, chances are that one can find a nice generating set for the automorphism group.
 \item Find a projective model of the K3 surface with a linear action by $M_{22}$ in characteristic $11$. Its existence is proven by Kondo \cite{kondo:m22} using  the crystalline Torelli type theorem.
 \item Use the present classification to study finite groups of automorphisms of Enriques surfaces beyond the semi-symplectic case \cite{mukai-ohashi:m12}.
\end{enumerate}
Finding equations for the surface is often much easier than for the automorphisms. Should you find equations or relevant publications on one of the surfaces treated here, please notify the first author. We will update the arXiv version of this paper with your findings.\\

\noindent
\textbf{Acknowledgments.}
We would like to thank the organizers of the conference
Moonshine and K3 surfaces in Nagoya in 2016 where the idea for this work was born. The first author would like to thank the University of Tokyo and Keiji Oguiso for their hospitality.
Thanks to Matthias Sch\"utt for encouragement and discussions. We warmly thank C\'edric Bonnaf\'e, Noam Elkies, Hisanori Ohashi and Alessandra Sarti for sharing explicit models of symmetric K3 surfaces with us.
We also thank the anonymous referee for carefully reading our manuscript and suggesting many improvements.
S.~B. is supported by  SFB-TRR 195 ”Symbolic Tools in Mathematics and their Application” of the German Research Foundation (DFG).
 K.~H. was partially supported by Grants-in-Aid for Scientific Research (17K14156).

\section{Lattices}
In this section we recall the basics on integral lattices (equivalently quadratic forms) and fix notation. The results are found in \cite{nikulin:quadratic_forms,conway-sloane:sphere_packings}.\\

A \emph{lattice} consists of a finitely generated free $\Z$-module $L$ and a non-degenerate integer valued symmetric bilinear form
\[\langle \cdot \, , \cdot \rangle \colon L \times L \rightarrow \Z.\]
Given a basis $(b_1,\dots, b_n)$ of $L$, we obtain the \emph{Gram matrix} $Q= (\langle b_i,b_j \rangle)_{1 \leq i,j \leq n }$. The determinant $\det Q$ is independent of the choice of basis and called the \emph{determinant} of the lattice $L$; it is denoted by $\det L$. We display lattices in terms of their Gram matrices. The \emph{signature} of a lattice is the signature of its Gram matrix. We denote it by $(s_+,s_-)$ where $s_+$ (respectively $s_-$) is the number of positive (respectively negative) eigenvalues.
We define the \emph{dual lattice} $L^\vee$ of $L$ by $L^\vee=\{x \in L \otimes \Q | \langle x, L \rangle \subseteq \Z \} \cong Hom(L,\Z)$. The \emph{discriminant group} $L^\vee/L$ is a finite abelian group of cardinality $|\det L|$.
We call a lattice \emph{unimodular} if $L=L^\vee$, and we call it \emph{even} if $\langle x , x \rangle$ is even for all $x \in L$.
The discriminant group of an even lattice carries the \emph{discriminant form}
\[q_L \colon L^\vee / L \rightarrow \Q / 2\Z, \quad \bar x \mapsto \langle x,x\rangle + 2\Z.\]
An \emph{isometry} of lattices is a linear map compatible with the bilinear forms. The \emph{orthogonal group} $O(L)$ is the group of isometries of $L$ and the special orthogonal group $\SO(L)$ consists of the isometries of determinant $1$. Discriminant forms are useful to describe embeddings of lattices and extensions of isometries.
A sublattice $L \subseteq M$ is called \emph{primitive}, if $L = (L \otimes \Q) \cap M$. By definition, the orthogonal complement $S^\perp\subseteq M$ of a (not necessarily primitive) sublattice $S$ is a primitive sublattice. For $L_1$ primitive and $L_2 = L_1^\perp$ we call $L_1 \oplus L_2 \subseteq M$ a \emph{primitive extension}.
Now, suppose that $M$ is even, unimodular, then
\[H_M = M/(L_1 \oplus L_2) \subseteq (\disc{L_1}) \oplus (\disc{L_2})\]
is the graph of a so called \emph{glue map} $\phi_M\colon \disc{L_1} \rightarrow \disc{L_2}$, that is, any element in $H_M$ is of the form $x \oplus \phi_M(x)$ for $x \in \disc{L_1}$. This isomorphism is an anti-isometry, namely, it satisfies $q_{L_2} \circ \phi_M = -q_{L_1}$.
Conversely given such an anti-isometry $\phi$, its graph $H_\phi$ defines a primitive extension $L_1\oplus L_2\subseteq M_\phi$ with $M_\phi$ even, unimodular.

Given an isometry $f \in O(L_1)$, it induces an isometry $\bar f \in O(\disc{L_1})$ of the discriminant group.
Let $g\in O(L_2)$ be an isometry on the orthogonal complement. Then $f\oplus g \in O(L_1 \oplus L_2)$ extends to $M$ if and only if $(\bar f \oplus \bar g)(H_M)=H_M$, or equivalently, $\phi_M \circ \bar f = \bar g \circ \phi_M$.

\begin{lem}\label{lem:extend}
 Let $L \subseteq M$ be a primitive sublattice of an even unimodular lattice $M$. Set $O(M,L)=\{f \in O(M) | f(L) = L\}$ and $K = L^\perp$. If the natural map $O(K) \rightarrow O(\disc{K})$ is surjective, then the restriction map $O(M,L)\rightarrow O(L)$ is surjective. In other words: any isometry of $L$ can be extended to an isometry of $M$.
\end{lem}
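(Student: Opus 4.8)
The plan is to reduce the statement to the gluing criterion recalled immediately before the lemma. Given $f \in O(L)$, I want to produce a companion isometry $g \in O(K)$ on the orthogonal complement so that $f \oplus g$ satisfies the compatibility condition $\phi_M \circ \bar f = \bar g \circ \phi_M$, since that condition is exactly what guarantees that $f \oplus g$ extends to an isometry $\tilde f$ of $M$. Any such $\tilde f$ acts as $f$ on $L$, hence preserves $L$ and lies in $O(M,L)$ with $\tilde f|_L = f$, giving surjectivity of the restriction map $O(M,L) \to O(L)$.

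First I would transport $\bar f$ across the glue map. Set $\bar g_0 = \phi_M \circ \bar f \circ \phi_M^{-1} \in \Aut(\disc{K})$; this choice is in fact forced, since any $g$ that works must induce precisely $\bar g_0$ on the discriminant group. The key verification is that $\bar g_0$ is not merely a group automorphism but an isometry of the discriminant form $q_K$. This uses that $\phi_M$ is an anti-isometry, $q_K \circ \phi_M = -q_L$, together with $\bar f \in O(\disc{L})$: writing $y = \phi_M(x)$, one computes $q_K(\bar g_0(y)) = q_K(\phi_M \bar f(x)) = -q_L(\bar f(x)) = -q_L(x) = q_K(\phi_M(x)) = q_K(y)$, so indeed $\bar g_0 \in O(\disc{K})$.

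Next I would invoke the hypothesis. By assumption the natural map $O(K) \to O(\disc{K})$ is surjective, so there exists a genuine lattice isometry $g \in O(K)$ with $\bar g = \bar g_0$. By construction $\phi_M \circ \bar f = \bar g \circ \phi_M$, so the gluing criterion applies: $f \oplus g$ extends to some $\tilde f \in O(M)$, which restricts to $f$ on $L$ and preserves $L$, completing the argument.

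I do not expect a deep obstacle here; the content is essentially bookkeeping with discriminant forms. The two points to get right are the sign in the anti-isometry relation when checking $\bar g_0 \in O(\disc{K})$, and the conceptual role of the hypothesis: the glue map rigidly determines the discriminant action required on the $K$-side, and surjectivity of $O(K) \to O(\disc{K})$ is exactly what lets us realize this forced discriminant isometry by an actual isometry of the lattice $K$ rather than only of its discriminant form.
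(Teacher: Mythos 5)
Your proposal is correct and follows essentially the same route as the paper: conjugate $\bar f$ by the glue map $\phi_M$, lift the result to $g \in O(K)$ using the surjectivity hypothesis, and conclude via the gluing criterion that $f \oplus g$ extends to $M$. The only difference is that you explicitly verify $\phi_M \circ \bar f \circ \phi_M^{-1} \in O(\disc{K})$ via the anti-isometry relation, a point the paper's proof leaves implicit when it takes a preimage.
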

\begin{proof}
 Denote the glue map by $\phi=\phi_M$, and let $g \in O(K)$ be a preimage of $\phi \circ \bar f \circ \phi^{-1}$. Then $\phi \circ \bar f = \bar g \circ \phi$. Hence, $f \oplus g$ extends to $M$.
\end{proof}

Let $L$ be a lattice and $G \leq O(L)$. We define the \emph{invariant} and \emph{coinvariant lattices} respectively by
\[L^G = \{x \in L  \mid\forall g \in G\colon g(x)=x \} \quad\text{and}\quad L_G = \left(L^G\right)^\perp.\]
Then, by definition, $L^G \oplus L_G \subseteq L$
is a primitive extension.
Two lattices are said to be in the same genus, if they become isometric after tensoring with the $p$-adics $\Z_p$ for all primes $p$ and the reals $\R$. A genus is denoted in terms of the Conway-Sloane symbols \cite[Chap. 15]{conway-sloane:sphere_packings}.
For instance the genus of even unimodular lattices of signature $(3,19)$ is denoted by $\II_{3,19}$. In fact all lattices in this genus are isometric.

\section{K3 surfaces and the Torelli type theorem}
In this section we recall standard facts about complex K3 surfaces.
All results can be found in the textbooks \cite{BHPV:compact_complex_surfaces, huybrechts:k3-book}.

Let $X$ be a K3 surface. Its second integral cohomology group $\H^2(X,\mathbb{Z})$
together with the cup product is an even unimodular lattice of signature $(3,19)$.
It comes equipped with an integral weight $2$ Hodge structure.
Such a Hodge structure is given by its Hodge decomposition
\[\H^2(X,\Z) \otimes \C = \H^2(X,\C)= \H^{2,0}(X)\oplus \H^{1,1}(X) \oplus \H^{0,2}(X)\]
with $\H^{i,j}(X) = \overline{\H^{j,i}}(X)$ and natural isomorphisms $\H^{i,j} \cong \H^j(X,\Omega_X^i)$. The corresponding Hodge numbers are $h^{2,0}=h^{0,2}=1$ and $h^{1,1}=20$.
We can recover the entire Hodge structure from $\H^{2,0}(X)$ via $\H^{0,2}(X)=\overline{\H^{2,0}(X)}$ and $\H^{1,1}(X) = \left(\H^{2,0}(X) \oplus \H^{0,2}(X)\right)^\perp$.
The \emph{transcendental lattice} of a K3 surface is defined as
the smallest primitive sublattice $T_X$ of $\H^2(X,\Z)$ such that $T_X\otimes \C$ contains the period $\H^{2,0}(X)=\C \sigma_X$.
By the Lefschetz theorem on $(1,1)$-classes, the N\'eron-Severi lattice $\NS_X$ of a K3 surface is given by
$\H^{1,1}(X) \cap \H^2(X,\Z)$. Note that $\NS_X$ and $T_X$ can be degenerate \cite[(3.5)]{nikulin:auto}. But if $X$ is projective, then they are (non-degenerate) lattices of signatures $(1,\rho-1)$ and $(2,20-\rho)$ respectively, and we have $\NS_X = T_X^\perp$.

As a next step we want to compare Hodge structures of different K3 surfaces. For this we fix a reference frame, namely a lattice $\Lambda \in \II_{3,19}$.
\begin{definition}
 A \emph{marked} K3 surface is a pair $(X,\eta)$ consisting of a complex K3 surface $X$ and an isometry $\eta \colon \H^2(X,\Z) \rightarrow \Lambda$. We call $\eta$ a \emph{marking}.
\end{definition}

We associate a marked K3 surface $(X,\eta)$ with its \emph{period}
\[\eta_\C\left(\H^{2,0}(X)\right) \in \mathcal{P}_\Lambda:=\{ \C \sigma \in \mathbb{P}(\Lambda \otimes \C) \mid \langle \sigma,\bar \sigma \rangle > 0, \langle \sigma,\sigma \rangle =0 \}.\]
Here we extend the bilinear form on $\Lambda$ linearly to that on $\Lambda \otimes \C$.
We call $\mathcal{P}_\Lambda$ the \emph{period domain}.
As it turns out, the concept of marking works well in families. This allows one to define the moduli space $\mathcal{M}_\Lambda$ of marked K3 surfaces and a period map
\[ \mathcal{M}_\Lambda \rightarrow \mathcal{P}_\Lambda, \quad (X,\eta) \mapsto \eta_\C\left(\H^{2,0}(X)\right).\]
The period map is in fact holomorphic, and it turns out to be surjective as well (the surjectivity of the period map for K3 surfaces \cite{todorov:1980}). The moduli space $\mathcal{M}_\Lambda$ is not very well behaved. For example it is not Hausdorff. This can be healed by taking into account the K\"ahler (resp.\ ample) cone.

The \emph{positive cone} $\mu_X$ is the connected component of the set
\[\{x \in \H^{1,1}(X,\R) \mid \langle x, x \rangle > 0\}\]
which contains a K\"ahler class.
Set $\Delta_X = \{x \in \NS_X \mid \langle x,x\rangle = -2\}$.
An element in $\Delta_X$ is called a root.
For $\delta \in \Delta_X$, either $\delta$ or $-\delta$ is an effective class by the Riemann--Roch theorem. In fact the effective cone is generated by the effective classes in $\Delta_X$ and the divisor classes in the closure of the positive cone (i.e.\ $\NS_X \cap \overline{\mu_X}$).
The connected components of the set $\mu_X \setminus \bigcup_{\delta \in \Delta_X} \delta^\perp$ are called the \emph{chambers}. The hyperplanes $\delta^\perp$ for $\delta \in \Delta_X$ are called the \emph{walls}.
One of the chambers is the K\"ahler cone.
For a root $\delta \in \Delta_X$, the reflection with respect to the wall $\delta^\perp$ is given by $r_\delta(x) = x + \langle x , \delta \rangle \delta$. The \emph{Weyl group} is the subgroup of $O(\H^2(X,\Z))$ generated by the reflections $r_\delta$ for $\delta \in \Delta_X$. The action of the Weyl group on the chambers is simply transitive. So by composing the marking with an element of the Weyl group, we can ensure that any given chamber in the positive cone of $\Lambda$ corresponds to the K\"ahler cone.
\begin{definition}
 Let $X,X'$ be K3 surfaces. An isometry $\phi\colon \H^2(X,\Z) \rightarrow \H^2(X',\Z)$ is called a \emph{Hodge isometry} if $\phi_\C(\H^{i,j}(X))\subseteq \H^{i,j}(X')$ for all $i,j$.
 It is called \emph{effective}, if it maps effective (resp. K\"ahler, resp. ample) classes on $X$ to effective (resp.\ K\"ahler, resp.\ ample) classes on $X'$.
\end{definition}
The following Torelli type theorem for K3 surfaces is the key tool for our classification of automorphisms.
\begin{thm}[{\cite{shafarevic:torelli,burns-rapoport:1975}}]
 Let $X$ and $X'$ be complex K3 surfaces. Let
 \[\phi \colon \H^2(X,\Z) \rightarrow \H^2(X',\Z)\]
 be an effective Hodge isometry. Then there is a unique isomorphism
 $f \colon X' \rightarrow X$ with $f^* = \phi$.
\end{thm}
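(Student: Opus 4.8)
The plan is to separate the two assertions: uniqueness is elementary, whereas existence is the deep statement, which I would establish by the classical route through Kummer surfaces and a density argument.

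\textbf{Uniqueness.} Suppose $f,g\colon X'\to X$ are two isomorphisms with $f^*=g^*=\phi$. Then $h:=f\circ g^{-1}\in\Aut(X)$ satisfies $h^*=\mathrm{id}$ on $\H^2(X,\Z)$, and it suffices to show that such an $h$ is the identity. Since $h^*$ fixes $\sigma_X$, the map $h$ is symplectic; since $h^*$ acts trivially on $\H^{1,1}(X,\R)$ it fixes a K\"ahler class, so $h$ lies in the finite group of automorphisms preserving that class and therefore has finite order. If $h\neq\mathrm{id}$, its fixed locus is a finite set of points, and the topological Lefschetz fixed point formula gives $\#\mathrm{Fix}(h)=\sum_i(-1)^i\operatorname{tr}\bigl(h^*\mid\H^i(X,\R)\bigr)=\chi(X)=24$. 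This contradicts Nikulin's bound that a non-trivial symplectic automorphism of finite order has at most eight fixed points. Hence $h=\mathrm{id}$ and $f=g$.

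\textbf{Existence for Kummer surfaces.} I would first prove the theorem when $X=\mathrm{Km}(A)$ is the minimal resolution of $A/\{\pm1\}$ for a complex $2$-torus $A$. Here the transcendental lattice satisfies $T_X\cong T_A(2)$, and the Hodge structure of $X$ is recovered from that of $A$. An effective Hodge isometry $\phi$ thus induces a Hodge isometry on the transcendental level of the associated tori $A',A$, which by the Torelli theorem for complex tori is realised by an isomorphism $A'\to A$. This isomorphism commutes with $\pm1$, descends to $A'/\{\pm1\}\to A/\{\pm1\}$, and lifts to the minimal resolutions to give $f\colon X'\to X$. The point where effectivity enters is that preservation of the ample (K\"ahler) cone selects the correct resolution and forces $f^*=\phi$, rather than $\phi$ composed with a Weyl reflection.

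\textbf{Propagation to the general case.} The periods of Kummer surfaces are dense in the period domain $\mathcal{P}_\Lambda$. Given arbitrary $X,X'$ with an effective Hodge isometry $\phi$, I would place them in families over a disc with central fibres $X,X'$ and with nearby fibres taken to be Kummer, using surjectivity of the period map to realise the approximating periods and transporting the markings by $\phi$. On the nearby Kummer fibres the previous step provides isomorphisms $f_t$, and the task is to extract a limit $f_0=f$. The main obstacle is exactly this limiting step: one must (i) keep the $f_t$ from degenerating, which I would control by carrying along an invariant K\"ahler class of bounded degree so that the graphs $\Gamma_{f_t}\subset X'_t\times X_t$ sweep out a bounded family, and (ii) show that the flat limit $\Gamma_0$ is again the graph of an isomorphism rather than a correspondence with extraneous contracted components. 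Effectivity re-enters decisively here: because $\phi$ carries the K\"ahler cone of $X'$ into that of $X$, the limit correspondence can neither contract nor create exceptional curves, so $\Gamma_0$ is the graph of an honest isomorphism $f$ with $f^*=\phi$. Upgrading the bimeromorphic correspondence supplied by the weak form of the theorem to a genuine isomorphism compatible with $\phi$, by ruling out these extra components in the limit, is the crux of the proof.
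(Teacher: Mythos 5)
The paper does not prove this statement at all: it is the classical strong Torelli theorem, imported wholesale from the cited references (Piatetski-Shapiro--Shafarevich, Burns--Rapoport), so there is no internal argument to compare yours against; what you have written is an outline of the classical proof from those references. Judged on its own terms, it splits into a sound half and a schematic half. The uniqueness argument is correct and essentially complete: a non-trivial symplectic automorphism of finite order has isolated fixed points, and Nikulin's bound of at most eight such points comes from the holomorphic Lefschetz formula, hence is independent of Torelli, so there is no circularity. The one step you should justify is the finiteness of the stabilizer of a K\"ahler class in $\Aut(X)$: for non-projective $X$ this uses the unique Ricci-flat metric in the class (Yau) together with discreteness of $\Aut(X)$; the paper's finiteness remark covers only invariant ample classes.

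The existence half, by contrast, is a roadmap whose two pivotal steps are exactly the hard theorems of the cited papers, and you state rather than prove them. First, in the Kummer step, ``Torelli for complex tori'' is not available in the form you invoke it: from the effective Hodge isometry you must show that the sixteen nodal classes map to the sixteen nodal classes (this is where effectivity enters), deduce from this a Hodge isometry $\H^2(A',\Z)\to\H^2(A,\Z)$, and then prove that an integral Hodge isometry of $\wedge^2\H^1$ of a complex $2$-torus is induced by an isomorphism of the tori. That last statement is genuinely delicate: not every isometry of $\wedge^2\H^1(A,\Z)$ comes from $\GL(\H^1(A,\Z))$, and the sign/duality ambiguity has to be resolved separately, which is a substantial part of the original argument. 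Second, in the propagation step, the assertion that the flat limit $\Gamma_0$ of the graphs $\Gamma_{f_t}$ is again a graph, rather than a correspondence containing excess components supported on products of curves, is precisely the Main Lemma of Burns--Rapoport; you correctly identify it as the crux, but boundedness of the family of graphs only guarantees existence of a limit cycle inducing $\phi$ on cohomology, and the analysis ruling out (or discarding) the extra components is the real content, for which you give no argument. So: right strategy, correct uniqueness, but the existence proof has genuine gaps at exactly the two places where the cited papers do their work.
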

We thus obtain a Hodge theoretic characterization of the automorphism group of a K3 surface.
\begin{cor}
 Let $X$ be a complex K3 surface. Then the image of the natural homomorphism
 \[\Aut(X) \rightarrow O(\H^2(X,\Z))\]
 consists of the isometries preserving the period and the K\"ahler cone.
\end{cor}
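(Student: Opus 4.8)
The plan is to prove the two inclusions of the claimed description of the image separately, deducing the nontrivial direction from the Torelli type theorem.

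For the inclusion ``$\subseteq$'', I would take $g \in \Aut(X)$ and examine $g^* \in O(\H^2(X,\Z))$. Since $g$ is holomorphic, $g^*$ preserves the Hodge decomposition, and in particular it fixes the line $\H^{2,0}(X) = \C\sigma_X$, so it preserves the period. Since $g$ is a biholomorphism, the pullback of a K\"ahler form is again a K\"ahler form, so $g^*$ maps the K\"ahler cone into itself; being invertible with inverse $(g^{-1})^*$, it then preserves the K\"ahler cone.

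For the inclusion ``$\supseteq$'', I would start with $\phi \in O(\H^2(X,\Z))$ preserving both the period and the K\"ahler cone, and first check that $\phi$ is a Hodge isometry. Indeed, $\phi$ is defined over $\Z$, hence over $\R$, so $\phi_\C$ commutes with complex conjugation; preserving $\H^{2,0}(X)$ therefore forces $\phi_\C(\H^{0,2}(X)) = \H^{0,2}(X)$, and as an isometry $\phi_\C$ then preserves the orthogonal complement $\H^{1,1}(X) = (\H^{2,0}(X)\oplus \H^{0,2}(X))^\perp$. The remaining task is to show that $\phi$ is \emph{effective}, after which the Torelli type theorem applied with $X' = X$ yields a unique $f \in \Aut(X)$ with $f^* = \phi$, placing $\phi$ in the image. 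Preservation of the K\"ahler cone already gives that K\"ahler classes map to K\"ahler classes, and since $\phi$ is a Hodge isometry over $\Z$ it preserves $\NS_X = \H^{1,1}(X)\cap\H^2(X,\Z)$, so the ample classes (the integral K\"ahler classes) are sent to ample classes. It then remains to treat effective classes, and I would do this on the generators of the effective cone recalled above: the part $\NS_X \cap \overline{\mu_X}$ is preserved because $\phi$ stabilizes the positive-cone component $\mu_X$ containing the K\"ahler cone, and for a root $\delta \in \Delta_X$ effectivity is detected by $\langle \delta, h \rangle > 0$ for a K\"ahler class $h$, a condition preserved since $\langle \phi(\delta),\phi(h)\rangle = \langle\delta,h\rangle$ with $\phi(h)$ again K\"ahler.

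The step I expect to be the main obstacle is exactly this effectivity verification: one must keep the positive-cone, root, and K\"ahler-cone data organized carefully enough that stabilization of the K\"ahler cone propagates to stabilization of the entire effective cone, and not merely of the K\"ahler and ample classes. Once effectivity is in hand, the corollary is an immediate consequence of the Torelli type theorem.
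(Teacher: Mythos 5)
Your proposal is correct and follows exactly the route the paper intends: the corollary is stated as an immediate consequence of the Torelli type theorem, and your argument simply supplies the routine verification (left implicit in the paper) that an isometry preserving the period and the K\"ahler cone is an effective Hodge isometry, using the description of the effective cone via roots and $\NS_X \cap \overline{\mu_X}$ that the paper recalls. Both directions of your argument are sound, so this is the paper's proof written out in full.
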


\section{Symplectic automorphisms}
In this section we review known facts on symplectic automorphisms needed later on.

Let $X$ be a complex K3 surface.
We obtain an exact sequence
\[1 \rightarrow \sym{\Aut(X)} \rightarrow \Aut(X) \xrightarrow{\rho} \GL(\C \sigma_X).\]
(Recall that we have $\C \sigma_X = \H^0(X,\Omega_X^2)$.)
The elements of the kernel $\sym{\Aut(X)}$ of $\rho$ are the symplectic automorphisms. An automorphism which is not symplectic is called non-symplectic. If $G\leq \Aut(X)$ is a group of automorphisms, we denote by $\sym{G}$ the kernel of $\rho|_G$ and call it the symplectic part of $\nsym{G}$. In order to keep the notation light, we identify $\nsym{G}$ and its isomorphic image in $O(\H^2(X,\Z))$.

Recall that if $L$ is a lattice and $G\leq O(L)$, then $L^G$ is the invariant and $L_G=\left(L^G\right)^\perp$ the coinvariant lattice.
For the sake of completeness we give a proof of the following essential lemma.
\begin{lem}[cf.\ {\cite{nikulin:auto}}]\label{lem:symplectic}
Let $\sym{G}\leq \sym{\Aut(X)}$ be a finite group of symplectic automorphisms of some K3 surface $X$. Then
\begin{enumerate}
 \item[(1)] $T_X \subseteq \H^2(X,\Z)^\sym{G}$ and $\H^2(X,\Z)_\sym{G} \subseteq \NS_X$;
 \item[(2)] $\H^2(X,\Z)^\sym{G}$ is of signature $(3,k)$ for some $k\leq 19$;
 \item[(3)] $\H^2(X,\Z)_\sym{G}$ is negative definite;
 \item[(4)] $\H^2(X,\Z)_\sym{G}$ contains no vectors of square $-2$;
 \item[(5)] if $\sym{G}$ is maximal (that is, $G_s$ is isomorphic to one of the 11 maximal finite groups of symplectic automorphisms), then $\sym{G}\cong\ker \left(O(H) \rightarrow O(\disc{H})\right)$ where $H=\H^2(X,\Z)_\sym{G}$.
\end{enumerate}
\end{lem}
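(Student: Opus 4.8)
The plan is to write $\Lambda = \H^2(X,\Z)$ and exploit that every $g \in \sym{G}$ fixes the period, $g^*\sigma_X = \sigma_X$. For part (1) I would first observe that the invariant lattice $\Lambda^{\sym{G}}$ is primitive and that $\sigma_X \in \Lambda^{\sym{G}} \otimes \C$, since each $g^*$ fixes $\C\sigma_X$; by minimality of the transcendental lattice this yields $T_X \subseteq \Lambda^{\sym{G}}$. Dually $\Lambda_{\sym{G}} \subseteq T_X^\perp$, and because $(T_X^\perp)\otimes\C \subseteq (\C\sigma_X \oplus \C\bar\sigma_X)^\perp = \H^{1,1}(X)$ we get $T_X^\perp \subseteq \NS_X$, proving both inclusions. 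For (2), averaging a K\"ahler class over the finite group $\sym{G}$ produces an invariant K\"ahler class $\omega \in \Lambda^{\sym{G}} \otimes \R$; together with the real and imaginary parts of $\sigma_X$ (which are fixed and span a positive-definite $2$-plane orthogonal to $\omega$) it spans a positive-definite $3$-plane inside $\Lambda^{\sym{G}}\otimes\R$. As $\Lambda$ has only three positive directions, $\Lambda^{\sym{G}}$ must have signature $(3,k)$.

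Parts (3) and (4) then follow quickly. Since $\Lambda^{\sym{G}}$ already contains all three positive directions, its orthogonal complement $\Lambda_{\sym{G}}$ is negative definite, giving (3). For (4), suppose $\delta \in \Lambda_{\sym{G}} \subseteq \NS_X$ had $\langle\delta,\delta\rangle = -2$. By Riemann--Roch either $\delta$ or $-\delta$ is effective, hence pairs strictly positively with the K\"ahler class $\omega$; but $\delta \in \Lambda_{\sym{G}} = (\Lambda^{\sym{G}})^\perp$ forces $\langle\delta,\omega\rangle = 0$, a contradiction.

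The heart of the lemma is (5). Set $H = \Lambda_{\sym{G}}$ and $\Gamma = \ker(O(H) \to O(\disc{H}))$, a finite group since $H$ is definite. I would first check that restriction gives an injection $\sym{G} \hookrightarrow \Gamma$: any $g$ acting trivially on $H$ also acts trivially on $\Lambda^{\sym{G}}$, hence on the finite-index sublattice $\Lambda^{\sym{G}} \oplus H$ and so on $\Lambda$, whence $g = \mathrm{id}$ by the Torelli theorem; and since $g$ acts trivially on $\disc{\Lambda^{\sym{G}}}$, the glue-map compatibility $\phi \circ \bar g|_{\Lambda^{\sym{G}}} = \bar g|_H \circ \phi$ forces $\bar g|_H = \mathrm{id}$ on $\disc{H}$, so the restriction lands in $\Gamma$. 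It remains to prove surjectivity. Given $\gamma \in \Gamma$, I extend it by the identity on $\Lambda^{\sym{G}}$; because $\bar\gamma = \mathrm{id}$ on $\disc{H}$ this $\tilde\gamma = \gamma \oplus \mathrm{id}$ extends to an isometry of $\Lambda$. By construction $\tilde\gamma$ fixes $\sigma_X$ and fixes $\omega$, which I take in the open K\"ahler cone; thus $\tilde\gamma$ is a Hodge isometry preserving the positive cone and sending the K\"ahler cone to a chamber containing $\omega$, namely itself. Hence $\tilde\gamma$ is an effective Hodge isometry and, by the Torelli theorem, is induced by a symplectic automorphism of $X$.

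Finally I would invoke maximality. The group $\langle \sym{G}, \tilde\gamma\rangle$ is a finite group of symplectic automorphisms of the \emph{same} surface $X$ containing $\sym{G}$; since $\sym{G}$ is one of Mukai's $11$ maximal symplectic groups, it is a maximal element among finite symplectic groups, so this larger group equals $\sym{G}$ and therefore $\gamma$ lies in the image of $\sym{G}$. Hence $\sym{G} \hookrightarrow \Gamma$ is onto, proving (5). I expect (5) to be the main obstacle: the delicate points are guaranteeing that the extended isometry $\tilde\gamma$ is effective so that Torelli applies --- handled by placing $\omega$ in the open K\"ahler cone, off every wall $\delta^\perp$ --- and applying maximality correctly, i.e.\ to the finite symplectic group realized on the very same $X$.
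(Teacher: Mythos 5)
Your proof is correct and follows essentially the same route as the paper's: averaging a K\"ahler class for (1)--(4), and for (5) extending a kernel element by the identity on $H^\perp$, invoking the strong Torelli theorem to realize it as a symplectic automorphism, and concluding by finiteness of $O(H)$ and maximality of $\sym{G}$. The only difference is that you spell out explicitly the injectivity of $\sym{G}\to\ker\left(O(H)\rightarrow O(\disc{H})\right)$ via the glue-map compatibility, a direction the paper leaves implicit.
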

\begin{proof}
(1) The elements of $\sym{G}$ are all symplectic, i.e.\ they fix the $2$-form $\sigma_X$. Thus $\C \sigma_X \subseteq \H^2(X,\Z)^\sym{G} \otimes \C$. By minimality of the transcendental lattice and primitivity of the invariant lattice, we get $T_X \subseteq \H^2(X,\Z)^\sym{G}$. Taking orthogonal complements yields the second inclusion.\newline
(2) Let $\kappa'$ be a K\"ahler class. Since automorphisms preserve the K\"ahler cone, the class $\kappa = \sum_{g \in G} g^*\kappa'$ is a $\sym{G}$-invariant K\"ahler class. Thus $\kappa, (\sigma_X + \bar \sigma_X)/2$ and $(\sigma_X - \bar \sigma_X)/(2i)$ span a positive definite subspace of dimension $3$ of $\H^2(X,\R)^\sym{G}$.\newline
(3) Recall that $H^2(X,\Z)_{\sym{G}}=\left(H^2(X,\Z)^{\sym{G}}\right)^\perp$, and $\H^2(X,\Z)$ has signature $(3,19)$. Now, use (2).
\newline
(4) As before we take a $\sym{G}$-invariant K\"ahler class $\kappa$.
If $r \in \NS_X$ is of square $-2$, then either $r$ or $-r$ is effective by the Riemann--Roch theorem. Thus $\langle \kappa,r \rangle\neq 0$.
Since $\H^2(X,\Z)_\sym{G}$ is orthogonal to $\kappa$, it cannot contain $r$.\newline
(5) Let $g$ be an element in the kernel.
Since $g$ acts trivially on $\disc{H}$, it can be extended to an
isometry $\tilde{g}$ on $H^2(X,\Z)$ such that
$\tilde{g}|_{H^\perp} = \mathop{id}_{H^\perp}$.
As $H^\perp\otimes \C$ contains $\sigma_X$ and a K\"ahler
class, $\tilde{g}$ is in fact an effective Hodge isometry.
The strong Torelli type theorem implies that it is induced
by a symplectic automorphism. Since the coinvariant lattice $H$
is negative definite (by (3)), $O(H)$ is finite. In particular,
the group $\tilde{G}$ generated by $\sym{G}$ and $g$ is a
finite group. By the maximality of $\sym{G}$, $\sym{G}$ must contain $g$.
\end{proof}


\begin{thm}[\cite{hashimoto:symplectic}]
Let $\sym{G}$ be a finite group of symplectic automorphisms of a $\Lambda$-marked K3 surface. Identify $\sym{G}$ with its image in $O(\Lambda)$. Then the conjugacy class of $\sym{G}$ is determined by the isometry class of the invariant lattice $\Lambda^\sym{G}$.
For maximal $G_s$, the invariant lattices can be found in Table \ref{tbl:symplectic}, and the coinvariant lattice $\Lambda_{G_s}$ is uniquely determined up to isomorphism by the abstract group structure of $G_s$.
\end{thm}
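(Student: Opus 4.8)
The plan is to reconstruct the pair $(\Lambda,\sym{G})$, up to conjugacy in $O(\Lambda)$, from the invariant lattice $\Lambda^\sym{G}$ in three moves: pass to the coinvariant lattice, recover the group acting on it, and recover the embedding into $\Lambda$. For the first move I would exploit that $\Lambda^\sym{G}\oplus H\subseteq\Lambda$ is a primitive extension of the even unimodular lattice $\Lambda$, where $H=\Lambda_\sym{G}$. The glue map then identifies $\disc{H}$ with $\disc{\Lambda^\sym{G}}$ and gives $q_H\cong -q_{\Lambda^\sym{G}}$; together with the signature $(0,\,22-\rank\Lambda^\sym{G})$ forced by Lemma \ref{lem:symplectic}(2)--(3), this determines the genus of $H$, since signature and discriminant form determine the genus. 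As $H$ is negative definite and root-free by Lemma \ref{lem:symplectic}(3)--(4), one checks in each relevant case that the genus contains a single isometry class, so the isometry type of $\Lambda^\sym{G}$ already pins down $H$.

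Next I would recover the group from the lattice. For maximal $\sym{G}$, Lemma \ref{lem:symplectic}(5) gives $\sym{G}\cong\ker\left(O(H)\to O(\disc{H})\right)$, so $\sym{G}\leq O(H)$ is read off canonically from $H$ alone. The invariant lattices recorded in Table \ref{tbl:symplectic} are then produced by a direct computation: realize each of the eleven maximal groups inside the automorphism group of the Leech lattice, take the coinvariant sublattice $H$, and read off $\Lambda^\sym{G}$ as its orthogonal complement in $\Lambda$ (determined up to isometry by the first move). Verifying that the eleven coinvariant lattices are pairwise non-isometric -- for instance from their discriminant forms and ranks -- shows that the abstract isomorphism type of $\sym{G}$ determines $H$, which is the final assertion of the theorem.

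It remains to upgrade an isometry of invariant lattices into a conjugacy. Let $A,B\leq O(\Lambda)$ each be induced by a finite group of symplectic automorphisms, with $\Lambda^A\cong\Lambda^B$; write $H_A=\Lambda_A$ and $H_B=\Lambda_B$. The first move supplies an isometry $H_A\to H_B$, and I want to assemble it with an isometry $\Lambda^A\to\Lambda^B$ into a single $\psi\in O(\Lambda)$ carrying the one primitive sublattice onto the other. Once such a $\psi$ exists, $\psi A\psi^{-1}=B$ follows from the kernel description: conjugation sends $\ker\left(O(H_A)\to O(\disc{H_A})\right)$ to $\ker\left(O(H_B)\to O(\disc{H_B})\right)$, while both groups act trivially on the respective invariant lattices.

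The hard part will be exactly the existence of $\psi$, that is, the equivariant uniqueness of the primitive embedding $\Lambda^\sym{G}\hookrightarrow\Lambda$. By the extension criterion $\phi_M\circ\bar f=\bar g\circ\phi_M$ recalled before Lemma \ref{lem:extend}, one needs the chosen isometries of $\Lambda^\sym{G}$ and of $H$ to induce matching automorphisms of the glued discriminant group. The maps available on $\disc{\Lambda^\sym{G}}$ are the image of $O(\Lambda^\sym{G})\to O(\disc{\Lambda^\sym{G}})$, while those on $\disc{H}$ are the image of $O(H)\to O(\disc{H})$, whose kernel is precisely $\sym{G}$ and which is therefore generally not surjective. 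One must show that these two images jointly cover enough of $O(\disc{H})\cong O(\disc{\Lambda^\sym{G}})$ that every glue-compatible pair can be realized after correction. Since $\Lambda^\sym{G}$ is indefinite of signature $(3,k)$, the map $O(\Lambda^\sym{G})\to O(\disc{\Lambda^\sym{G}})$ is surjective by Nikulin's criteria in the bulk of cases, and then Lemma \ref{lem:extend} closes the argument; isolating and treating by hand the finite list of exceptions is the step I expect to demand the most care.
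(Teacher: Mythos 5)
First, a framing remark: the paper does not prove this statement at all --- it is imported wholesale from \cite{hashimoto:symplectic} --- so your outline can only be judged on its own merits, and its decisive step fails. You close the argument by asserting that $O(\syminv)\rightarrow O(\disc{\syminv})$ is surjective ``by Nikulin's criteria'' because ``$\syminv$ is indefinite of signature $(3,k)$''. For the groups this theorem (and this paper) is about, the premise is false: maximal $\sym{G}$ forces $\rank\syminv=3$, so the signature is $(3,0)$ and $\syminv$ is \emph{positive definite} --- exactly the feature of the maximal case that the whole paper exploits. The conclusion fails as well, and not in a short list of exceptions but in every one of the eleven cases: $O(\syminv)\cong\{\pm1\}\times\SO(\syminv)$ has order at most $24$ (the $\SO$'s are the dihedral groups $D_k$, $k\leq 6$, of Table \ref{tbl:symplectic}), while $O(\disc{\syminv})\cong O(q_{\symco})$ (the glue map is an anti-isometry of discriminant groups) has order between $32$ and $288$ by the last column of the same table. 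So no glue-correcting isometry can be found on the invariant side. The surjectivity that actually makes the extension work lives on the coinvariant side: $O(\symco)\rightarrow O(\disc{\symco})$ is surjective (Lemma \ref{lem:surj}, i.e.\ Theorem 5.1 of \cite{hashimoto:symplectic}), a genuinely hard computational statement about rank-$19$ definite lattices; combined with Lemma \ref{lem:extend} it produces the $\psi$ you need. In other words, the step you expected to ``demand the most care'' does not need care --- it fails identically as designed, and repairing it amounts to invoking the main technical theorem of the very reference being cited.

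There are two further gaps. (i) Your first move requires the genus of $H=\symco$ to contain a single isometry class. You offer no argument, and this is not a workable route: these are negative definite lattices of rank $19$, whose class numbers are not controllable by hand; the proof in \cite{hashimoto:symplectic} avoids genus class-number questions altogether by gluing the coinvariant lattice into a rank-$24$ root-free even unimodular lattice --- necessarily the Leech lattice --- and working with conjugacy there (this is also how the present paper obtains the lattices $\symco$, via the tables of \cite{hoehn:leech}). That ``same genus'' and ``isometric'' genuinely diverge in this very problem is visible in Table \ref{tbl:symplectic}: for Nos.\ 70, 74 and 79 the invariant lattices form one genus containing two distinct isometry classes, which is precisely why $11$ groups yield $14$ actions. (ii) Your reconstruction of the group as $\ker\left(O(H)\to O(\disc{H})\right)$ rests on Lemma \ref{lem:symplectic}(5), which is available only for maximal $\sym{G}$, whereas the first assertion of the theorem concerns an arbitrary finite symplectic group. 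And for the final assertion, verifying that the eleven coinvariant lattices are pairwise non-isometric proves the wrong implication: what must be shown is that two non-conjugate actions of the same abstract group (again Nos.\ 70, 74, 79) nevertheless have isometric coinvariant lattices --- a well-definedness statement that once more requires the Leech-embedding machinery, not a pairwise-distinctness check.
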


For maximal $\sym{G}$, we have $\rank \syminv=3$ and $\rank \symco=19$ 
\cite{mukai:symplectic}.
The key observation we take from
Lemma \ref{lem:symplectic}, is that the invariant lattice is definite
and so is the coinvariant lattice. Hence, the direct product $O\left(H^2(X,\Z)^\sym{G}\right) \times O\left(H^2(X,\Z)_\sym{G}\right)$ is a finite group.
It can be computed explicitly with the Plesken-Souvignier algorithm \cite{plesken-souvignier:orthogonal} as implemented for instance in PARI \cite{pari}.
As it turns out the groups $G \leq \Aut(X)$ we aim to classify
are subgroups of this product.

\begin{table}
\caption{Maximal finite symplectic groups of automorphisms}
\label{tbl:symplectic}
\begin{equation*}
\begin{array}{c|c|c|c|c|c|c|c|c} \hline \Tstrut
\text{No.} & \sym{G} & \#\sym{G} & \det \Lambda^\sym{G}& \text{genus of } \Lambda^\sym{G}& \Lambda^\sym{G} & \SO(\syminv) & \#O(\symco) & \#O\!\left(q_{\symco}\right)\\ \hline
\rowcolor{blue!0} 54 & T_{48} & 48 & 384 & 2^{+1}_1,8^{-2}_\II,3^{+1} & \TQ{2}{16}{16}{8}{0}{0} & D_{6} & 9216 & 192\\
\rowcolor{blue!8} 62 & N_{72} & 72 & 324 & 4^{+1}_7,3^{+2},9^{+1} & \TQ{6}{6}{12}{3}{3}{0} & D_4 & 20736 & 288\\
\rowcolor{blue!0} 63 & M_9 & 72 & 216 & 2^{-3}_1,3^{+1},9^{+1} & \TQ{2}{12}{12}{6}{0}{0} & D_{6} & 5184& 72\\
\rowcolor{blue!8} &&&&& \TQ{4}{4}{20}{0}{0}{1} & D_2 && \\
\rowcolor{blue!8} \multirow{-2}{*}{$70$} & \multirow{-2}{*}{$\MF{S}_5$} & \multirow{-2}{*}{$120$} & \multirow{-2}{*}{$300$} & \multirow{-2}{*}{$4^{-1}_5,3^{-1},5^{-2}$} &  \TQ{4}{6}{16}{1}{2}{2} & D_2 & \multirow{-2}{*}{$5760$} & \multirow{-2}{*}{$48$}\\
\rowcolor{blue!0} &&&&& \TQ{2}{4}{28}{0}{0}{1} & D_2 & &\\
\rowcolor{blue!0} \multirow{-2}{*}{$74$} & \multirow{-2}{*}{$L_2(7)$} & \multirow{-2}{*}{$168$} & \multirow{-2}{*}{$196$} & \multirow{-2}{*}{$4^{+1}_7,7^{+2}$} & \TQ{4}{8}{8}{1}{2}{2} & D_4 &\multirow{-2}{*}{$5376$} & \multirow{-2}{*}{$32$}\\
\rowcolor{blue!8} 76 & H_{192} & 192 & 384 & 4^{-2}_4,8^{+1}_1,3^{+1} & \TQ{4}{8}{12}{0}{0}{0} & D_4 & 24576 & 128\\
\rowcolor{blue!0} 77 & T_{192} & 192  & 192 & 4^{-3}_1,3^{-1} & \TQ{4}{8}{8}{4}{0}{0} & D_{6} & 36864 & 192\\
\rowcolor{blue!8} 78 & \MF{A}_{4,4} & 288 & 288 & 2^{+2}_\II,8^{+1}_7,3^{+2} & \TQ{8}{8}{8}{2}{4}{4} & D_4 & 36864 & 128\\
\rowcolor{blue!0} &&&&& \TQ{2}{8}{12}{0}{0}{1} & D_2 & & \\
\rowcolor{blue!0} \multirow{-2}{*}{$79$} & \multirow{-2}{*}{$\MF{A}_6$} & \multirow{-2}{*}{$360$} & \multirow{-2}{*}{$180$} & \multirow{-2}{*}{$4^{-1}_3,3^{+2},5^{+1}$} &\TQ{6}{6}{8}{3}{3}{0} & D_4 & \multirow{-2}{*}{$11520$} & \multirow{-2}{*}{$32$}\\
\rowcolor{blue!8} 80 & F_{384} & 384 & 256 & 4^{+1}_1,8^{+2}_2 & \TQ{4}{8}{8}{0}{0}{0} & D_4 & 49152 & 128\\
\rowcolor{blue!0} 81 & M_{20} & 960 & 160 & 2^{-2}_\II,8^{+1}_7,5^{-1} & \TQ{4}{4}{12}{2}{2}{0} & D_4 & 92160 & 96\\ \hline
\end{array}
\end{equation*}
No.\ denotes the number of the group $\sym{G}\leq O(\Lambda)$ as given in \cite{hashimoto:symplectic}. It is isomorphic to the corresponding group in the column $\sym{G}$. See \cite{mukai:symplectic} for the notation. The entry genus is given in Conway and Sloane's \cite{conway-sloane:sphere_packings} notation. The dihedral group of order $2k$ is denote by $D_k$ .
\end{table}

\section{Non-symplectic extensions}\label{sect:extensions}
In this section we prove the classification.
The invariant lattices of the symplectic actions play a major role.
For a start we observe that the cyclic group $\nsym{G}/\sym{G}$ acts on the invariant lattice.
Indeed for $g \in \nsym{G}$ and $x\in \H^2(X,\Z)^{\sym{G}}$, $g \sym{G} (x) = g(x)$ is well defined and lies in the invariant lattice since $\sym{G}$ is normal in $\nsym{G}$ and fixes $x$.
This yields a homomorphism
\[\nsym{G}/\sym{G} \rightarrow O(\H^2(X,\Z)^{\sym{G}})\]
of groups which turns out to be injective.

\begin{lem}\label{lem:conditions}
Let $X$ be a K3 surface and $G \leq \Aut(X)$ a finite group of automorphisms such that the subgroup $\sym{G} \leq \nsym{G}$ of symplectic automorphisms is among the $11$ maximal ones.
Then the homomorphism $\nsym{G}/\sym{G} \rightarrow O(\H^2(X,\Z)^{\sym{G}})$ is injective and its image is a cyclic subgroup of $\SO(\H^2(X,\Z)^{\sym{G}})$. In particular its order is $n \in \{ 1,2,3,4,6\}$.
\end{lem}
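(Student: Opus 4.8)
The plan is to exploit the very special shape of the invariant lattice $\Lambda^{\sym{G}}:=\H^2(X,\Z)^{\sym{G}}$ in the maximal case. By Lemma~\ref{lem:symplectic}(2) it has signature $(3,k)$, and since $\sym{G}$ is maximal its rank is $3$; hence $\Lambda^{\sym{G}}$ is positive definite of signature $(3,0)$. I would first describe $\Lambda^{\sym{G}}\otimes\R$ concretely. Writing $\sigma_X=\alpha+i\beta$ with $\alpha,\beta$ real, the relations $\langle\sigma_X,\sigma_X\rangle=0$ and $\langle\sigma_X,\bar\sigma_X\rangle>0$ force $\alpha,\beta$ to be orthogonal of equal positive norm, so they span a positive definite plane $P\subseteq T_X\otimes\R\subseteq\Lambda^{\sym{G}}\otimes\R$. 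Averaging a Kähler class over the whole group, $\kappa=\sum_{h\in G}h^*\kappa'$, produces a $G$-invariant (in particular $\sym{G}$-invariant) Kähler class $\kappa\in\Lambda^{\sym{G}}\otimes\R$, orthogonal to $P$ because $\H^{1,1}\perp(\H^{2,0}\oplus\H^{0,2})$. As $\Lambda^{\sym{G}}\otimes\R$ is $3$-dimensional, it is the orthogonal sum $P\oplus\R\kappa$.

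For injectivity, suppose $g\sym{G}$ lies in the kernel, i.e.\ $g$ fixes $\Lambda^{\sym{G}}$ pointwise. Since $T_X\subseteq\Lambda^{\sym{G}}$ by Lemma~\ref{lem:symplectic}(1), $g$ fixes $T_X\otimes\R\ni\alpha,\beta$, hence fixes $\sigma_X=\alpha+i\beta$. Thus $\rho(g)=1$, so $g\in\sym{G}$ and the map is injective. In particular its image is isomorphic to $G/\sym{G}\cong\mu_n$, which is cyclic; this settles the cyclicity claim.

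The statements that the image lies in $\SO$ and that the order is bounded both follow by computing the action of a single $g\in G$ on $\Lambda^{\sym{G}}\otimes\R=P\oplus\R\kappa$. Since $\kappa$ is $G$-invariant, $g$ fixes $\kappa$. On $P$, the identity $g^*\sigma_X=\rho(g)\sigma_X$ with $\rho(g)=e^{i\theta}$ unwinds to $g^*\alpha=\cos\theta\,\alpha-\sin\theta\,\beta$ and $g^*\beta=\sin\theta\,\alpha+\cos\theta\,\beta$, so $g$ acts on $P$ as a rotation by $\theta$. A rotation has determinant $1$ and $g$ fixes $\kappa$, whence $\det(g|_{\Lambda^{\sym{G}}})=1$ and the image lies in $\SO(\Lambda^{\sym{G}})$.

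Finally, for the order, let $g$ generate the image, so $\rho(g)=\zeta_n$ is a primitive $n$-th root of unity and $g$ rotates $P$ with eigenvalues $\zeta_n,\zeta_n^{-1}$ while fixing $\kappa$. The characteristic polynomial of $g$ on the rank-$3$ integral lattice $\Lambda^{\sym{G}}$ lies in $\Z[t]$ and has degree $3$; it is divisible by the minimal polynomial $\Phi_n$ of $\zeta_n$ (of degree $\varphi(n)$) and, because $1$ is an eigenvalue, by $t-1$. When $n\geq3$ these factors are coprime, forcing $\varphi(n)+1\leq3$, i.e.\ $\varphi(n)\leq2$; together with the cases $n\in\{1,2\}$ this gives $n\in\{1,2,3,4,6\}$. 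I expect the only genuinely delicate point to be the bookkeeping in the third step: correctly identifying $\kappa$ and $P$ as the fixed and rotated subspaces and tying the rotation angle to $\rho(g)$. Once this is set up, both the $\SO$ statement and the cyclotomic degree bound follow mechanically.
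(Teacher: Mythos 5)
Your proposal is correct and takes essentially the same approach as the paper: both exploit that $\H^2(X,\Z)^{\sym{G}}$ has rank $3$ and is spanned over $\R$ by a $G$-invariant K\"ahler class together with $\operatorname{Re}\sigma_X$ and $\operatorname{Im}\sigma_X$, deduce injectivity and cyclicity from the faithful action of $\nsym{G}/\sym{G}$ on $\C\sigma_X$, and obtain the bound on $n$ and the $\SO$ claim from the integrality of the characteristic polynomial (cyclotomic factors) and its determinant. The only cosmetic difference is that you read off the eigenvalues $1,\zeta_n,\zeta_n^{-1}$ directly from the rotation picture, whereas the paper enumerates the possible degree-$3$ products of cyclotomic polynomials divisible by $(x-1)$ and excludes $(x+1)(x-1)^2$ via the conjugate-eigenvalue constraint.
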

\begin{proof}
By our assumption $\sym{G}$ is maximal. Thus, by Table \ref{tbl:symplectic},  $\H^2(X,\mathbb{Z})^\sym{G}$ is of rank $3$.
Hence a basis of $\H^2(X,\mathbb{R})^\sym{G}$ is given by a $G$-invariant K\"ahler class $\kappa$, $(\sigma_X +\bar \sigma_X)/2$ and $(\sigma_X -\bar \sigma_X)/(2i)$ (see the proof of Lemma \ref{lem:symplectic}).
Since $\nsym{G}/\sym{G}$ acts on $\H^{2,0}(X)=\C \sigma_X$ faithfully (by the definition of $\sym{G}$), the injectivity in the statement of the lemma follows. By the same reason, $\nsym{G}/\sym{G}$ is cyclic.

Let $g\sym{G}$ be a generator of $\nsym{G}/\sym{G}$.
Then $(x-1)$ divides $\chi(x)=\det(x \mbox{Id} - g|_{\H^2(X,\Z)^\sym{G}})$.
Since $\H^2(X,\mathbb{R})^\sym{G}$ is actually defined
over $\Q$ and $g$ is of finite order, $\chi(x)$ is
a product of cyclotomic polynomials. Note that
the eigenvectors $\sigma_X$ and $\bar \sigma_X$ have
complex conjugate eigenvalues.
Hence $\chi(x)\neq (x+1)(x-1)^2$. This leaves us with
$\chi(x)/(x-1)$ to be one of $(x \pm 1)^2$ or $\Phi_n$ for $n\in \{3,4,6\}$. We conclude by computing the determinant from the characteristic polynomial.
 \end{proof}
Recall that via a marking we may identify $\H^2(X,\Z)$ and $\Lambda$.
\begin{rem}
A choice of basis turns the groups $\SO(\syminv)$ into subgroups of $\SL(\Z^3)$.
Finite subgroups of $\SO(3)$ are an essential building block for crystallographic groups.
It is known that they are isomorphic to a subgroup of a dihedral group, or one of $(\Z/2\Z)^2 \rtimes C_3 \leq (\Z/2\Z)^2 \rtimes S_3$ (cf. \cite[Table I]{auslander-cook:crystallographic}). Up to conjugation there
are exactly $3$ subgroups of the second type, i.e. $3$ integral representations of $(\Z/2\Z)^2 \rtimes C_3$. Since the three representations are irreducible, there is, up to homothety, a unique invariant quadratic form for each. Their gram matrices are given by
\[
\begin{pmatrix}
 1&0&0\\
 0&1&0\\
 0&0&1
\end{pmatrix},
\begin{pmatrix}
 2  &1& 2\\
 1  &2 &2\\
2& 2&  4
\end{pmatrix},
\begin{pmatrix}
 3 &2& 2\\
2 & 4 & 0\\
2 & 0&  4
\end{pmatrix}\]
and are of determinant $1$, $2^2$ and $2^4$. Obviously none of the invariant lattices in Table \ref{tbl:symplectic} is homothetic to one of these three. Thus $\SO(\syminv)$ must be a
(subgroup of) a dihedral group.
\end{rem}

Part 2 of the next lemma will be used later in the proof of Proposition \ref{prop:k3fromgroup}.
\begin{lem}\label{lem:orientation-reversal}
Let $H$ be one of the $14$ symplectic fixed lattices and $g \in \SO(H)$ an involution. Then
\begin{enumerate}
\item $\SO(H)$ is isomorphic to a dihedral group $D_k$ of order $2k$ with $k \in \{2,4,6\}$;
\item there is another involution $f\in \SO(H)$ in the centralizer of $g$ with $\det f|H_g=-1$ where $H_g$ denotes the coinvariant lattice of the group generated by $g$.
\end{enumerate}
\end{lem}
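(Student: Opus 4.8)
The plan is to reduce everything to a concrete understanding of $\SO(H)$ as a dihedral group and then locate the desired involution $f$ by explicit inspection. First I would establish part (1). From the preceding remark we already know that $\SO(\syminv)$ is a subgroup of a dihedral group, since none of the fixed lattices is homothetic to one of the three exceptional forms admitting the $(\Z/2\Z)^2\rtimes C_3$ action. Each of the $14$ lattices $H$ is definite of rank $3$, so $O(H)$ is finite and explicitly computable (via Plesken--Souvignier, as noted after Table~\ref{tbl:symplectic}), and the column $\SO(\syminv)$ of Table~\ref{tbl:symplectic} already records the answer: in every case $\SO(H)\cong D_k$ with $k\in\{2,4,6\}$. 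So part (1) is really a matter of reading off the table, together with the observation that a nontrivial $\SO$ of a positive definite rank-$3$ lattice corresponds to a finite rotation subgroup of $\SO(3)$, forcing $k\ge 2$.

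For part (2) I would argue inside the dihedral group $D_k=\SO(H)$. The key structural fact is that in $\SO(3)$ an involution is a rotation by $\pi$ about some axis; its $(-1)$-eigenspace (the coinvariant part $H_g\otimes\R$) is the $2$-plane orthogonal to that axis, while the fixed axis spans $H^{\langle g\rangle}\otimes\R$. I want a second involution $f\in\SO(H)$ that commutes with $g$ and acts with determinant $-1$ on $H_g$. The natural candidate is the $\pi$-rotation whose axis lies inside the plane $H_g\otimes\R$: two such rotations about perpendicular axes commute, their product is the third $\pi$-rotation, and they generate a Klein four-group $V_4\cong(\Z/2\Z)^2$ of rotations. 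For such an $f$, the fixed axis of $f$ lies in $H_g$, so $f$ restricted to $H_g$ has a $+1$ eigenvector and a $-1$ eigenvector, giving $\det(f|_{H_g})=-1$ exactly as required. The task is thus to verify that $D_k$ actually contains such a commuting involution $f$.

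The main obstacle will be the case $k=2$, i.e.\ $\SO(H)\cong D_2\cong(\Z/2\Z)^2$, which occurs for several lattices (groups $\mathfrak{S}_5$, $L_2(7)$, $\mathfrak{A}_6$ in Table~\ref{tbl:symplectic}). Here the whole of $\SO(H)$ is the Klein four-group of the three mutually perpendicular $\pi$-rotations, so given the involution $g$ one simply takes $f$ to be either of the other two nontrivial elements; they automatically commute with $g$ (the group is abelian), and each such $f$ is a $\pi$-rotation about an axis orthogonal to the axis of $g$, hence lying in the plane $H_g$, which yields $\det(f|_{H_g})=-1$. For $k\in\{4,6\}$ the group $D_k$ contains a cyclic rotation part $C_k$ together with $k$ reflections (which, as elements of $\SO(3)$, are the $\pi$-rotations about the in-plane axes). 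If $g$ is one of the in-plane $\pi$-rotations, I pick for $f$ another in-plane $\pi$-rotation with perpendicular axis, available because $k$ is even so the $k$ reflection axes of $D_k$ include a pair at angle $\pi/2$; the two then commute and generate a $V_4$, giving the determinant $-1$ as above. If instead $g$ is the central rotation by $\pi$ in $C_k$ (possible only when $k$ is even, where $H_g$ is the full rotation plane), then any reflection $f$ of $D_k$ commutes with the center and restricts to $H_g$ as a planar reflection, again of determinant $-1$. I would organize the proof as this short case split on the conjugacy type of $g$ within $D_k$, and I expect the only genuine care is in confirming, case by case, that the commuting partner $f$ one selects does land in $\SO(H)$ rather than merely in $O(H)$ — which is exactly where reading the explicit generators produced by the Plesken--Souvignier computation closes the argument.
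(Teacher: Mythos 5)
Your proposal is correct, and for part (1) it coincides with the paper's proof: both reduce to the machine computation of $\SO(H)$ recorded in the $\SO(\syminv)$ column of Table \ref{tbl:symplectic} (the paper additionally works out No.\ 63 by hand as an illustration). For part (2) you take a genuinely more constructive route. The paper picks an \emph{arbitrary} involution $f\neq g$ commuting with $g$ (such an $f$ exists in any $D_k$) and shows the determinant condition holds automatically: every involution in $\SO(H)$ has characteristic polynomial $(x-1)(x+1)^2$, so $f|_{H_g}$ being an involution of determinant $1$ on a rank-$2$ lattice would force $f|_{H_g}=\pm\mathrm{id}$; the case $+\mathrm{id}$ contradicts the characteristic polynomial of $f$, and the case $-\mathrm{id}$ (together with $\det f=1$) makes $f$ agree with $g$ on the finite-index sublattice $H^g\oplus H_g$, hence $f=g$, which was excluded. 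You instead build $f$ explicitly as a $\pi$-rotation whose axis lies in the plane $H_g\otimes\R$, with a case split on $k$ and on the conjugacy type of $g$ inside $D_k$. Both arguments are valid; the paper's eigenvalue trick buys freedom from the case analysis and from any appeal to the geometric normal form of dihedral subgroups of $\SO(3)$, while yours makes the commuting partner and its action completely explicit. One remark: your closing worry---that the chosen $f$ might lie only in $O(H)$ and that one must consult the Plesken--Souvignier generators to rule this out---is a non-issue, since every element you use is taken from $\SO(H)\cong D_k$ itself. What your argument actually requires is that a dihedral (or Klein four) subgroup of $\SO(3)$ acts in the standard way, i.e.\ cyclic part rotating about a common axis and the remaining involutions being $\pi$-rotations about coplanar axes; this is guaranteed by the classification of finite subgroups of $\SO(3)$ (or, for $D_2$, by the fact that commuting $\pi$-rotations have equal or perpendicular axes), not by further computation.
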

\begin{proof}
 The proof of 1.\ is by a direct computation of $\SO(H)$ for each case using a computer.
 For the reader's entertainment we calculate No.\ 63 by hand.
 The fixed lattice is given as the orthogonal direct sum $(2) \oplus A_2(6)$ where
 $A_2(6)$ is a rescaled hexagonal lattice. The orthogonal group of the hexagonal lattice is
 that of the hexagon, i.e. it is the dihedral group $D_6$. Since the decomposition of a lattice into irreducible lattices is unique up to ordering, the orthogonal group preserves this decomposition and is isomorphic to $\{\pm 1\} \times D_6$. It contains the special orthogonal group with index $2$. The restriction of the $\SO$ action to the hexagonal plane is by $D_6$. Thus it is in fact isomorphic to $D_6$.

 For 2. fix $k\in \{2,4,6\}$ and let $g \in D_k$ be an involution.
 Then it is not hard to check, that there exists an involution $f$ different from $g$ and commuting with $g$. Now view $f$ and $g$ as elements of $\SO(H)$. Then the characteristic polynomials of both are equal to $(x-1)(x+1)^2$. If $\det f|H_g = 1$, then $f|H_g = -\text{id}$. This implies $f=g$, which we excluded.
\end{proof}
Recall the exact sequence
\[1 \rightarrow \sym{G} \rightarrow \nsym{G} \rightarrow \mu_n \rightarrow 1.\]
From the proof of Lemma \ref{lem:conditions} we obtain that if $\sym{G}$ is maximal, then $n \in \{1,2,3,4,6\}$. We want to reconstruct $\nsym{G} \leq O(\Lambda)$ knowing $\sym{G}$ and $\nsym{G}/\sym{G}\cong \mu_n$. In this situation one speaks of an extension of groups. We are interested not only in the group structure, but also in its action on the K3 lattice. This motivates the next definition.
\begin{definition}
Let $L$ be a lattice, $G \leq O(L)$ a group of isometries and $N \leq G$ a normal subgroup with cyclic quotient $G/N = \langle gN \rangle$. We say that $G$ is an \emph{extension} of $N$ by $g|_{L^N}$ where $L^N$ denotes the invariant lattice of $N\leq O(L)$.
\end{definition}
\begin{rem}
 In our setting $L$ is unimodular and $N$ coincides with the kernel of the natural map $O(L, L_N) \rightarrow O(L_N^\vee/L_N$).
 In this case, $G\leq O(L)$ is uniquely determined by $N$ and $g|_{L^N}$.
\end{rem}

Before extending the group, we first have to extend single elements. We are in the luxurious position that every element extends:
\begin{lem}[{\cite[Thm 5.1]{hashimoto:symplectic}\label{lem:surj}}]
 Let $\Lambda_\sym{G}$ be the coinvariant lattice for one of the $11$ maximal finite groups. Then the natural map
 \[ \psi \colon O(\Lambda_\sym{G}) \rightarrow O(\disc{\Lambda_\sym{G}})\]
 is surjective.
 In particular any isometry of $O(\Lambda^{\sym{G}})$ can be extended to an element in $O(\Lambda)$ normalizing $\sym{G}$.
\end{lem}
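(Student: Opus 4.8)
The plan is to reduce surjectivity of $\psi$ to one numerical identity by first identifying its kernel with $\sym{G}$ as \emph{subgroups} of $O(\symco)$, and then to read off the extension statement from Lemma \ref{lem:extend}. Throughout I identify $\sym{G}$ with its image under the restriction $\sym{G}\to O(\symco)$; this is injective because an element of $\sym{G}$ fixes $\syminv=\symco^\perp$ pointwise, so it is determined by its action on $\symco$ (the two summands span $\Lambda\otimes\Q$). I would then establish $\sym{G}=\ker\psi$ by proving both inclusions.

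For $\sym{G}\subseteq\ker\psi$ I use unimodularity of $\Lambda$: since $\sym{G}$ fixes $\syminv$ pointwise it acts trivially on $\disc{\syminv}$, and transporting through the glue anti-isometry $\phi_\Lambda\colon\disc{\syminv}\to\disc{\symco}$ shows it acts trivially on $\disc{\symco}$, i.e.\ it lies in $\ker\psi$. The reverse inclusion $\ker\psi\subseteq\sym{G}$ is precisely the content of the proof of Lemma \ref{lem:symplectic}(5): any $g\in\ker\psi$ extends by the identity on $\syminv$ to an effective Hodge isometry of $\Lambda$, which the strong Torelli theorem realizes as a symplectic automorphism, and maximality of $\sym{G}$ forces this automorphism into $\sym{G}$; its restriction to $\symco$ is $g$. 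With $\sym{G}=\ker\psi$ in hand, surjectivity of $\psi$ becomes the single identity $\#O(\symco)=\#\sym{G}\cdot\#O(q_{\symco})$, since $\#\operatorname{im}\psi=\#O(\symco)/\#\ker\psi$. This identity holds row by row in Table \ref{tbl:symplectic} for the $11$ maximal groups (for example $9216=48\cdot 192$ for No.\ $54$ and $92160=960\cdot 96$ for No.\ $81$); the orders $\#O(\symco)$ are obtained from the Plesken--Souvignier algorithm applied to the negative definite rank-$19$ lattice $\symco$, and $\#O(q_{\symco})$ from the explicit discriminant form.

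For the ``in particular'' clause I would apply Lemma \ref{lem:extend} with $M=\Lambda$, $L=\syminv$ and $K=L^\perp=\symco$: the surjectivity of $O(K)\to O(\disc{K})$ just proved (this is $\psi$) gives that the restriction $O(\Lambda,\syminv)\to O(\syminv)$ is surjective, so every $f\in O(\syminv)$ lifts to some $\tilde f\in O(\Lambda)$ with $\tilde f(\syminv)=\syminv$, hence $\tilde f(\symco)=\symco$. To see that $\tilde f$ normalizes $\sym{G}$, I use that $\sym{G}=\ker\psi$ is normal in $O(\symco)$: for $h\in\sym{G}$ the conjugate $\tilde f h\tilde f^{-1}$ still fixes $\syminv$ pointwise (as $h$ does and $\tilde f^{\pm1}$ preserve $\syminv$) and restricts on $\symco$ to $(\tilde f|_{\symco})(h|_{\symco})(\tilde f|_{\symco})^{-1}\in\ker\psi$; since an isometry of $\Lambda$ is determined by its restrictions to $\syminv$ and $\symco$, this forces $\tilde f h\tilde f^{-1}\in\sym{G}$, so $\tilde f\sym{G}\tilde f^{-1}=\sym{G}$.

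The main obstacle is twofold and concentrated in the inclusion $\ker\psi\subseteq\sym{G}$ and the numerical input. The former genuinely needs the strong Torelli theorem together with the maximality hypothesis, and this is exactly where ``one of the $11$ maximal groups'' enters and cannot be dropped. The latter rests on the explicit order $\#O(\symco)$ of the orthogonal group of a definite lattice of rank $19$, which is feasible only by machine. Everything else---the inclusion $\sym{G}\subseteq\ker\psi$, the reduction to counting, and the normalization check---is formal. An alternative avoiding the order computation would be to list generators of $O(q_{\symco})$ and lift each individually through $\phi_\Lambda$ and Lemma \ref{lem:extend}, but the uniform counting argument is cleaner.
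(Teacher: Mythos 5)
Your proposal is correct and follows essentially the same route as the paper: the paper formally cites \cite[Thm 5.1]{hashimoto:symplectic}, but the remark immediately after Lemma \ref{lem:surj} sketches exactly your argument --- identify $\ker \psi$ with $\sym{G}$ via Lemma \ref{lem:symplectic}(5), so that surjectivity becomes equivalent to the counting identity $\#\sym{G}\cdot\#O(q_{\symco})=\#O(\symco)$, which is then verified for all $11$ groups from the machine-computed orders in Table \ref{tbl:symplectic}. Your treatment of the ``in particular'' clause via Lemma \ref{lem:extend} together with normality of $\ker\psi$ in $O(\symco)$ also matches how the paper deploys these facts (e.g.\ in the proof of Lemma \ref{lem:unique-extension}).
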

\begin{rem}
 One may double check the theorem as follows:
 first compute $O(\Lambda_\sym{G})$ with the Plesken-Souvignier backtracking algorithm.
 Then check by a direct computation that the natural map is surjective. For the reader's convenience we list the orders of the groups involved in Table \ref{tbl:symplectic}. Note that by Lemma \ref{lem:symplectic} we have $\# \sym{G} \cdot \#O(q_{\Lambda_{\sym{G}}}) = \#O(\Lambda_{\sym{G}})$, if and only if the natural map $\psi$ is surjective.
\end{rem}
In general extensions of a given group of isometries are not unique, not even up to conjugacy. But we are in a particulary nice situation.
\begin{lem}\label{lem:unique-extension}
Let $\sym{G} \leq O(\Lambda)$ be one of the $11$ maximal symplectic groups.
Let $g \in O(\Lambda^\sym{G})$ be an isometry. Then
\begin{enumerate}
 \item there is a unique extension of $\sym{G}$ by $g$;
 \item if $\tilde{g}\in O(\Lambda^\sym{G})$ is conjugate to $g$, then the corresponding extensions are conjugate in $O(\Lambda)$.
 \end{enumerate}
\end{lem}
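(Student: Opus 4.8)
The plan is to give an \emph{intrinsic} description of the extension that makes no reference to the chosen lift of $g$, and then read off both uniqueness and the conjugacy statement from it. Throughout write $N=\sym{G}$, and recall that $\Lambda$ is even unimodular, so $\Lambda/(\syminv\oplus\symco)$ is the graph of a glue anti-isometry $\phi\colon \disc{\syminv}\to\disc{\symco}$, and an isometry preserving the orthogonal decomposition $\syminv\oplus\symco$ extends to $\Lambda$ if and only if its two restrictions are intertwined by $\phi$ on the discriminant groups. The first step is to identify $N$ inside $O(\Lambda)$: by Lemma \ref{lem:symplectic}(5), restriction to $\symco$ maps $N$ isomorphically onto $K:=\ker\left(O(\symco)\to O(\disc{\symco})\right)$, and since an isometry of $\Lambda$ that is the identity on $\syminv$ is determined by its restriction to $\symco$ (as $\syminv\oplus\symco$ has finite index in $\Lambda$), this yields the characterization $N=\{f\in O(\Lambda)\mid f|_{\syminv}=\mathrm{id},\ f|_{\symco}\in K\}$.

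Next I would treat the lifts of $g$. Any $f\in O(\Lambda)$ with $f|_{\syminv}=g$ preserves $\syminv$, hence $\symco$, and the glue condition forces $\overline{f|_{\symco}}=\phi\,\bar g\,\phi^{-1}$; therefore $f|_{\symco}$ is pinned down modulo $K$, and combined with the description of $N$ this shows that all such $f$ form a single coset $\hat g N$. Lemma \ref{lem:surj} supplies at least one lift $\hat g$ normalizing $N$; because $K$ is \emph{normal} in $O(\symco)$, every element of the coset $\hat g N$ normalizes $N$, and $\hat g^{\,\ord(g)}$ restricts to $\mathrm{id}$ on $\syminv$ and to an element of $K$ on $\symco$, so it lies in $N$. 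Hence $G:=\langle N,\hat g\rangle$ has $N$ as a normal subgroup with cyclic quotient generated by $\hat g N$, i.e.\ it is an extension of $N$ by $g$; this gives existence in part 1.

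For uniqueness I would prove the lift-free formula $G=\{f\in O(\Lambda)\mid f|_{\syminv}\in\langle g\rangle\}$. One inclusion is immediate; for the other, if $f|_{\syminv}=g^i$ then $f(\hat g^{\,i})^{-1}$ is the identity on $\syminv$ and, by the same glue rigidity, restricts to an element of $K$ on $\symco$, hence lies in $N$ by the characterization above, so $f\in\hat g^{\,i}N\subseteq G$. As the right-hand side is visibly independent of the chosen lift $\hat g$, any two extensions of $N$ by $g$ coincide. Finally, for part 2, write $\tilde g=\gamma g\gamma^{-1}$ with $\gamma\in O(\syminv)$ and use Lemma \ref{lem:surj} again to extend $\gamma$ to some $\Gamma\in O(\Lambda)$ normalizing $N$. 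Then $\Gamma G\Gamma^{-1}$ contains $\Gamma N\Gamma^{-1}=N$ as a normal subgroup and is generated over it by $\Gamma\hat g\Gamma^{-1}$, whose restriction to $\syminv$ is $\gamma g\gamma^{-1}=\tilde g$; thus $\Gamma G\Gamma^{-1}$ is an extension of $N$ by $\tilde g$, and equals the extension $\tilde G$ by part 1.

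The crux — and the step I expect to require the most care — is the first one: the intrinsic identification of $N=\sym{G}$ inside $O(\Lambda)$, together with the glue rigidity showing that the lifts of $g$ form exactly one $N$-coset. Once this is in place, existence, uniqueness and conjugacy all drop out, with Lemma \ref{lem:surj} doing the remaining work of producing lifts (of $g$ and of $\gamma$) that normalize $N$.
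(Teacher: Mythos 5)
Your proof is correct and follows essentially the same route as the paper's: both rest on the glue-map formalism, on Lemma \ref{lem:surj} to produce lifts (of $g$ and of the conjugator $\gamma$), and on Lemma \ref{lem:symplectic}(5) identifying the lifts' ambiguity $\ker\left(O(\symco)\to O(\disc{\symco})\right)$ with $\sym{G}$ itself. Your intrinsic formula $G=\{f\in O(\Lambda)\mid f|_{\syminv}\in\langle g\rangle\}$ is a clean repackaging of the paper's observation that any two choices of the complementary isometry $h$ differ by an element of $\ker\psi\cong\sym{G}$, so the generated group is unchanged.
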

\begin{proof}
 Recall that $\sym{G}$ is a subgroup of the orthogonal group of the K3 lattice $\Lambda$. In particular we have a primitive extension $\Lambda^\sym{G} \oplus \Lambda_\sym{G} \subseteq \Lambda$.
 Since the K3 lattice is unimodular, this primitive extension is determined by an anti-isometry
 \[\phi \colon \disc{{\Lambda^\sym{G}}} \longrightarrow \disc{{\Lambda_\sym{G}}}.\]
 The natural map
 $ \psi \colon O(\Lambda_\sym{G})
 \rightarrow O(\disc{{\Lambda_\sym{G}}})$,
 $f \mapsto \bar f$ is surjective (Lemma \ref{lem:surj}). Hence, we find an $h \in O(\Lambda_\sym{G})$ such that $\overline{h} = \phi \circ \overline{g}\circ \phi^{-1}$.
 This means that $\tilde{g}= g \oplus h$ extends to an isometry of $\Lambda$.
 We set $\nsym{G} = \langle \sym{G},\tilde{g} \rangle$.
 Any other choice of $h$ is of the form $h \cdot (\mathop{id}_{\Lambda^\sym{G}} \oplus f)$ with $f \in \ker \psi \cong \sym{G}$ (Lemma \ref{lem:symplectic} 5.). Then $\nsym{G}$ remains unchanged.
 We now turn to the second claim.
 Let $f\in O(\Lambda^\sym{G})$ and let $g^f= f^{-1}g f$ be a conjugate of $g$. Take an extension $\tilde{g}$ of $g$ to an isometry of $\Lambda$. We can extend $f$ to an isometry $\tilde{f}=f \oplus f'$ of $\Lambda$ as well (Lemma \ref{lem:extend}). Since the restriction $\sym{G}|_{\Lambda_\sym{G}}$ is a normal subgroup of $O(\Lambda_\sym{G})$, conjugation by $f$ preserves $\sym{G}$.
 Further the restriction of $\tilde{g}^{\tilde{f}}$
 to ${\Lambda^\sym{G}}$ is equal to $g^f$.
 Hence, by part 1, the extensions $G^{\tilde{f}}$ and $\langle \tilde{g^f},\sym{G} \rangle$ are equal.
\end{proof}
If $(X,G) \cong (X',G')$ are isomorphic pairs consisting of a $\Lambda$-marked K3 surface with a group of automorphisms, then $G$ and $G'$ (viewed in $O(\Lambda)$ via the marking) are conjugate. In our case the pairs do not deform, so there is hope for the converse statement to hold.
\begin{prop}\label{prop:k3fromgroup}
Let $(X,\eta)$ and $(X',\eta')$ be marked K3 surfaces and $\nsym{G}\leq \Aut(X)$, $\nsym{G'}\leq \Aut(X')$ finite subgroups such that $\sym{G}$ and $\sym{G'}$ are isomorphic to one of the $11$ maximal groups.
Suppose that $\eta G \eta^{-1}$ and $\eta' G' \eta'^{-1}$ are conjugate in $O(\Lambda)$, then there is an isomorphism $f\colon X \rightarrow X'$ with $G = f^{-1}G'f$, i.e. the pairs $(X,G)$ and $(X',G')$ are isomorphic.
\end{prop}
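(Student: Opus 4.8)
The plan is to deduce the isomorphism from the strong Torelli type theorem. After transporting everything into $O(\Lambda)$ via the markings, I will exhibit an effective Hodge isometry between the two Hodge structures that normalizes the common group, and then read off the desired isomorphism. First I would normalize the set-up. Write $\Gamma=\eta\nsym{G}\eta^{-1}$ and $\Gamma'=\eta'\nsym{G'}\eta'^{-1}$ for the images in $O(\Lambda)$. By hypothesis there is $h\in O(\Lambda)$ with $h\Gamma h^{-1}=\Gamma'$, so replacing the marking $\eta$ by $h\eta$ (which changes neither $X$, nor $\nsym{G}$, nor the isomorphism class of the pair) I may assume $\Gamma=\Gamma'$; call this common group $\Gamma$ and let $\Gamma_s$ be its symplectic part, one of the $11$ maximal groups. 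Let $\omega=\eta_\C(\H^{2,0}(X))$ and $\omega'=\eta'_\C(\H^{2,0}(X'))$ be the two periods. By Lemma~\ref{lem:symplectic}(1) both lie in the positive definite rank-$3$ lattice $\Lambda^{\Gamma_s}\otimes\C$. Fix a generator $g$ of the cyclic group $\Gamma/\Gamma_s\cong\mu_n$ (Lemma~\ref{lem:conditions}); then $\omega,\omega'$ are $g$-eigenlines for the eigenvalues $\rho(g)$ computed on $X$ and on $X'$, both primitive $n$-th roots of unity.

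Second I would pin down the two periods. Let $V=(\Lambda^{\Gamma_s})_{\langle g\rangle}\otimes\R$ be the rank-$2$ coinvariant space of $\langle g\rangle$, i.e.\ the sum of the non-trivial $g$-eigenspaces. Since $V$ is positive definite of rank $2$, its quadratic form has exactly two isotropic lines in $V\otimes\C$, and these are complex conjugate; as $\omega$ and $\omega'$ are isotropic $g$-eigenlines, they must be these two lines, so $\omega'\in\{\omega,\bar\omega\}$. To resolve this ambiguity I invoke Lemma~\ref{lem:orientation-reversal}(2): there is an involution $f\in\SO(\Lambda^{\Gamma_s})$ commuting with $g$ and with $\det(f|V)=-1$. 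Such an $f$ is a reflection on the definite plane $V$, so $f_\C$ interchanges its two isotropic lines, i.e.\ $f_\C(\omega)=\bar\omega$. Extending $f$ to $\tilde f\in O(\Lambda)$ by Lemma~\ref{lem:extend} (the hypothesis holds for $K=\Lambda_{\Gamma_s}$ by Lemma~\ref{lem:surj}) and using that $f$ commutes with $g$, Lemma~\ref{lem:unique-extension}(2) shows $\tilde f$ normalizes $\Gamma$. Hence among the four isometries $\pm\mathrm{id},\pm\tilde f$, all of which normalize $\Gamma$, there is one carrying the line $\omega$ to the line $\omega'$; being an isometry matching the periods, it is a Hodge isometry between the Hodge structures of $X$ and $X'$.

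Third I would arrange effectivity without leaving the normalizer of $\Gamma$, which is where I expect the one genuinely delicate point. The group $\Gamma$ fixes the rank-$1$ positive definite lattice $\Lambda^{\Gamma}=(\Lambda^{\Gamma_s})^{\langle g\rangle}$, spanned by some $d$, and averaging a Kähler class over $\nsym{G}$ produces a $\Gamma$-invariant ample class $\kappa_X\in\Lambda^{\Gamma}\otimes\R$, and likewise $\kappa_{X'}$. Because $\kappa_X$ is ample, $\langle d,\delta\rangle\neq0$ for every root $\delta$, so $d$ lies in the interior of a single chamber for each of the two Hodge structures, bypassing the Weyl group entirely. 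Among the four candidates of the previous step there is then a unique $\Psi$ with $\Psi_\C(\omega)=\omega'$ and $\Psi(\kappa_X)$ in the same ray as $\kappa_{X'}$, the remaining sign being fixed by post-composition with $-\mathrm{id}$, which preserves the period line. Since $\Psi$ normalizes $\Gamma$ it preserves $\Lambda^{\Gamma}$ and carries the chamber decomposition of $X$ to that of $X'$; as it sends the interior class $\kappa_X$ into the Kähler cone of $X'$, it sends the whole Kähler cone of $X$ onto that of $X'$. Thus $\Psi$ is an effective Hodge isometry.

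Finally, $\phi:=\eta'^{-1}\circ\Psi\circ\eta\colon\H^2(X,\Z)\to\H^2(X',\Z)$ is an effective Hodge isometry, so by the strong Torelli type theorem it is induced by a unique isomorphism between $X$ and $X'$. Since $\Psi$ normalizes $\Gamma$ and $\eta\nsym{G}\eta^{-1}=\eta'\nsym{G'}\eta'^{-1}=\Gamma$, the isometry $\phi$ conjugates the action of $\nsym{G}$ to that of $\nsym{G'}$; by faithfulness of the representation on $\H^2$ this isomorphism identifies the pairs $(X,\nsym{G})$ and $(X',\nsym{G'})$. I expect the main obstacle to lie entirely in the second and third steps: the period is determined only up to complex conjugation inside the definite plane $V$, and both the orientation-reversing isometry resolving this and the sign needed for effectivity must be chosen within the normalizer of $\Gamma$. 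This is exactly the content furnished by Lemma~\ref{lem:orientation-reversal}(2), combined with the observation that the rank-$1$ invariant ample lattice $\Lambda^{\Gamma}$ pins down the Kähler chamber and so removes any need for an auxiliary Weyl group element.
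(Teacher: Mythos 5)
Your overall architecture is the same as the paper's (normalize by the conjugator, pin down the period, fix orientation and sign inside the normalizer of $\Gamma$, then apply the Torelli theorem), and your treatment of $n=2$ coincides with the paper's. The genuine gap is in your second step, in the cases $n\in\{3,4,6\}$. Lemma~\ref{lem:orientation-reversal}(2) is a statement about \emph{involutions} $g\in\SO(H)$ only, and the element you ask it to produce --- an involution $f\in\SO(\Lambda^{\Gamma_s})$ commuting with $g$ and with $\det(f|V)=-1$ --- provably does not exist once $g$ has order $>2$ on $\Lambda^{\Gamma_s}$ (and by the injectivity in Lemma~\ref{lem:conditions} its order there is exactly $n$). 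Indeed, $g$ then has three distinct eigenvalues $1,\zeta,\bar\zeta$ on $\Lambda^{\Gamma_s}\otimes\C$, so any isometry commuting with $g$ preserves each eigenline; in particular it preserves the plane $V$ and restricts there to an element of $\Ortho(V)$ commuting with a rotation of order $>2$, hence to a rotation, forcing $\det(f|V)=+1$. The case in which you need this element, namely $\omega'=\bar\omega$, cannot be discarded: nothing in the hypotheses fixes which of the two isotropic eigenlines of $V\otimes\C$ is the period (by the surjectivity of the period map both occur, cf.\ Proposition~\ref{prop:class}), so for $n>2$ your argument produces no isometry at all carrying $\omega$ to $\omega'$.

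The repair is precisely the mechanism the paper uses for $n>2$: use an element \emph{inverting} $g$ rather than centralizing it. In $\SO(\Lambda^{\Gamma_s})\cong D_k$ all elements of order $n>2$ form a single conjugacy class, so there is an order-two element $r$ with $rgr^{-1}=g^{-1}$; extend it to $\tilde r\in O(\Lambda)$ using Lemmas~\ref{lem:surj} and~\ref{lem:extend}. By Lemma~\ref{lem:unique-extension}(2), $\tilde r\Gamma\tilde r^{-1}$ is the unique extension of $\Gamma_s$ by $g^{-1}$, which equals $\Gamma$ itself (it contains $\tilde g^{-1}$), so $\tilde r$ normalizes $\Gamma$; and if $gv=\zeta v$ then $g(rv)=rg^{-1}v=\bar\zeta\,(rv)$, so $r_\C$ carries the $\zeta$-eigenline $\omega$ to $\bar\omega$. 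Substituting $\tilde r$ for your $\tilde f$ when $n>2$ --- the paper packages this as choosing ``distinguished generators'' and adjusting the conjugator inside the dihedral group --- the remainder of your proof (the rank-one invariant lattice pinning down the ample ray, the sign fix by $-\mathrm{id}$, and the appeal to the strong Torelli theorem) is sound and agrees with the paper's.
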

\begin{proof}
 Changing the marking $\eta$ conjugates $\eta G \eta^{-1}$ in $O(\Lambda)$. To ease notation, we identify $G,G'$ with their image in $O(\Lambda)$. In order to use the strong Torelli type Theorem, we have to produce an effective Hodge isometry conjugating $G$ and $G'$.

 Let $n$ be the order of $\nsym{G}/\sym{G}$. We choose a primitive $n$th root of unity $\zeta \in \C$. Then $\nsym{G}/\sym{G}$ comes with a distinguished generator $g\sym{G}$ given by $g\left(\eta_\C(\sigma_X)\right)=\zeta \sigma_X$. And likewise $g'\sym{G'}$.
 By assumption $\nsym{G}$ and $\nsym{G'}$ are conjugate via some $f \in O(\Lambda)$.
 If $n=2$, then the generators $g\sym{G}$ and $g'\sym{G'}$ are unique.
 Otherwise $n=3,4,6$, and then $\SO(\Lambda^{\sym{G}})$ is a dihedral group of order $8$ or $12$ (Lemma \ref{lem:orientation-reversal}). In any case there is a unique conjugacy class
 of order $n$. Since we can extend any conjugator of the dihedral group to an element of $O(\Lambda)$ (Lemma \ref{lem:extend}) preserving $\sym{G}$, we may modify the conjugator $f$ in such a way that it conjugates the distinguished generators $g\sym{G}$ and $g'\sym{G'}$ as well.
 So after conjugation, we may assume that $\nsym{G'} = \nsym{G}$ and further that $g'\sym{G'}  = g \sym{G}$.

 Suppose that $n > 2$. Then the periods of $X$ and $X'$ are uniquely determined by the distingued generators as the (1-dimensional!) eigenspaces with eigenvalue $\zeta$ of $g|H(X,\C)^\nsym{G}$, respectively $g'|H(X',\C)^{\nsym{G'}}$. And we are done. (Note that if $\sigma$ is an eigenvector for $\zeta \neq \pm 1$, then $\langle \sigma,\sigma \rangle=\zeta^2 \langle \sigma,\sigma \rangle$ implies $\sigma^2=0$.)
 If $n=2$ then the eigenspace for $-1$ of $g|H(X,\C)^G$ is of dimension $2$. However, the period is of square zero. Thus the period is one of the two isotropic lines in the eigenspace.
 These correspond to the two orientations of the transcendental lattice.
 By Lemma \ref{lem:orientation-reversal} one can find an isometry $f$ of $\Lambda^{G}$ centralizing $g$ and reversing the orientation. This $f$ extends to an isometry of $\Lambda$ preserving $G$. Thus we have obtained a Hodge isometry conjugating $G$ and $G'$.

 Note that $\H^2(X,\Z)^G$ is spanned by an ample class $l$ and likewise for $G'$. Since our Hodge isometry conjugates $G$ and $G'$ it maps $l$ to $l'$ or $-l'$.
 In the second case our Hodge isometry is not effective. However, we may then replace it by its negative.
\end{proof}

\begin{prop}\label{prop:class}
  Let $\sym{G} \leq O(\Lambda)$ be a maximal symplectic group. There is a one to one correspondence between conjugacy classes of non trivial cyclic subgroups of $\SO(\syminv)$ and isomorphism classes of pairs $(X,\nsym{G}')$ consisting of a K3 surface $X$ and $G' \leq \Aut(X)$ a finite subgroup with $\sym{G}\cong \sym{G'}<G'$.
\end{prop}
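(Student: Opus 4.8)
The plan is to exhibit two mutually inverse maps and to verify that each is well defined, using the marking to identify $\H^2(X,\Z)$ with $\Lambda$ throughout. Recall that because $\sym{G}$ is maximal, $\syminv$ is positive definite of rank $3$ and $\SO(\syminv)$ is one of the dihedral groups $D_k$, $k\in\{2,4,6\}$, of Lemma \ref{lem:orientation-reversal}, so a non-trivial cyclic subgroup $C\leq \SO(\syminv)$ is generated by an element $g$ of order $n\in\{2,3,4,6\}$.

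For the forward map I would send the conjugacy class of $C=\langle g\rangle$ to the extension $\nsym{G}=\langle \sym{G},\tilde g\rangle\leq O(\Lambda)$ furnished by Lemma \ref{lem:unique-extension}(1). This depends only on $C$ and not on the chosen generator: any other generator is $g^k$ with $\gcd(k,n)=1$, and since $\tilde g^{\,k}$ restricts to $g^k$ the uniqueness in Lemma \ref{lem:unique-extension}(1) shows $\langle \sym{G},\widetilde{g^k}\rangle=\langle\sym{G},\tilde g^{\,k}\rangle=\nsym{G}$. By Lemma \ref{lem:unique-extension}(2) the $O(\Lambda)$-conjugacy class of $\nsym{G}$ depends only on the conjugacy class of $C$.

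The crux is to realize $\nsym{G}$ geometrically. Fixing a primitive $n$th root of unity $\zeta$, I would let $\sigma$ span the $\zeta$-eigenspace of $g$ on $\syminv\otimes\C$ when $n>2$, and take $\sigma$ to be an isotropic line in the two dimensional $(-1)$-eigenspace when $n=2$; positive definiteness of $\syminv$ gives $\langle\sigma,\sigma\rangle=0$ and $\langle\sigma,\bar\sigma\rangle>0$, so $\sigma\in\mathcal{P}_\Lambda$, and surjectivity of the period map produces a marked K3 surface $(X,\eta)$ with this period. As $\sym{G}$ fixes $\sigma$ and $\tilde g$ scales it by $\zeta$, the group $\nsym{G}$ acts by Hodge isometries with $\rho(\tilde g)=\zeta$. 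The delicate point is effectivity. Here $\Lambda^{\nsym{G}}=(\syminv)^{\langle g\rangle}$ has rank one; I fix a generator $l$ with $\langle l,l\rangle>0$, so $l\perp\sigma$ and $l\in\NS_X$. The key computation is that $l$ lies on no wall: since $T_X\otimes\Q$ is exactly the rank two $\langle g\rangle$-coinvariant subspace of $\syminv\otimes\Q$, decomposing a vector $v\in\NS_X$ with $v\perp l$ along $\Lambda\otimes\Q=(\syminv\otimes\Q)\oplus(\symco\otimes\Q)$ and using that $\syminv$ is definite forces its $\syminv$-component to vanish, whence $l^\perp\cap\NS_X=\symco$. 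By Lemma \ref{lem:symplectic}(4) the lattice $\symco$ has no vector of square $-2$, so no root of $\NS_X$ is orthogonal to $l$ and $l$ is interior to a chamber. Composing $\eta$ with a Weyl group element (which fixes $\sigma$) makes this chamber the K\"ahler cone; since $\nsym{G}$ fixes $l$ it preserves this chamber and the period, hence consists of automorphisms by the Corollary to the Torelli theorem. Proposition \ref{prop:k3fromgroup} guarantees the resulting pair $(X,\nsym{G})$ is well defined up to isomorphism.

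For the inverse, given a pair $(X,\nsym{G}')$ I would choose a marking and, using that the isometry class of $\Lambda^{\sym{G}'}$ determines the conjugacy class of $\sym{G}'$ \cite{hashimoto:symplectic}, adjust the marking so that $\sym{G}'=\sym{G}$ in $O(\Lambda)$; Lemma \ref{lem:conditions} then realizes $\nsym{G}'/\sym{G}$ as a cyclic $C\leq\SO(\syminv)$, non-trivial because $\sym{G}\lneq\nsym{G}'$. Two admissible markings differ by an element of the normalizer of $\sym{G}$ in $O(\Lambda)$, which by Lemma \ref{lem:surj} surjects onto $O(\syminv)$, so the conjugacy class of $C$ is well defined; that the two maps are mutually inverse then follows from Lemma \ref{lem:unique-extension} and Proposition \ref{prop:k3fromgroup}. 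The main obstacle is precisely the effectivity step of the third paragraph, and it is resolved entirely by the identity $l^\perp\cap\NS_X=\symco$ together with the root-freeness of $\symco$, which is what keeps $\nsym{G}$ from meeting the Weyl group. A secondary bookkeeping point deserves care: the invariant attached to a pair is a priori an $O(\syminv)$-conjugacy class, so one must check that for the dihedral groups in play this matches the stated conjugacy in $\SO(\syminv)$, and handle separately the few maximal groups possessing two invariant lattices.
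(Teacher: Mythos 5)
Your proposal is correct and follows essentially the same route as the paper's proof: realize the extension $\langle \sym{G},\tilde g\rangle$ via Lemma \ref{lem:unique-extension}, take an eigenvector of $g$ on $\syminv\otimes\C$ as period and a generator of $\Lambda^{\nsym{G}}$ as K\"ahler (hence ample) class, then invoke the Torelli type theorem and surjectivity of the period map, with Proposition \ref{prop:k3fromgroup} handling uniqueness of the pair. The only difference is one of detail: your computation that $l^\perp\cap\NS_X=\symco$ is root-free, so that $l$ lies on no wall and can be moved into the K\"ahler cone by the Weyl group, spells out the wall-avoidance step that the paper compresses into the remark that the generator is ``in fact ample since it is integral.''
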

\begin{proof}
It remains to show that each cyclic subgroup is actually coming from a K3 surface. To see this, choose a suitable eigenvector of $\nsym{G}/\sym{G}|_{\syminv}$ as period, a generator of $\Lambda^\nsym{G}$ as K\"ahler class which is in fact ample since it is integral. Then use the global Torelli type theorem and surjectivity of the period map.
\end{proof}

\begin{rem}
  Fix a maximal symplectic group $\sym{G} \leq O(\Lambda)$.
  The non-symplectic extensions $\sym{G}<G$ which we are classifying lie in the extension
\begin{equation}
SO(\Lambda^{\sym{G}}).\sym{G} = O(\Lambda)\cap \left(SO(\Lambda^\sym{G}) \times O(\Lambda_\sym{G})\right).
\end{equation}
They are the subgroups $\sym{G} < G\leq SO(\Lambda^{\sym{G}}).\sym{G}$ with $G/\sym{G}$ cyclic.
\end{rem}

\begin{proof}[Proof of Theorem \ref{thm:classification}]
We use the correspondence set up in Proposition \ref{prop:class}.
By Lemma \ref{lem:orientation-reversal}, $\SO(\syminv)$ is isomorphic to a dihedral group $D_n$ of order $2n$ for $n \in \{2,4,6\}$.
Its maximal cyclic subgroups up to conjugacy are two groups of order $2$ generated by reflections and one group of order $n$ generated by a rotation. Thus for each of the $14$ actions there are $3$ maximal extensions leading to $42= 3\cdot 14$ cases.

It remains to derive the additional data which we provide with the tables.
Let $\mu_n \cong \langle g \rangle \leq \SO(\syminv)$ be a cyclic subgroup and $G$ the corresponding extension. The invariant polarization $\Lambda^G$
is computed as the kernel of $1-g \in \End_\Z(\Lambda^G)$ and the transcendental lattice as the orthogonal complement of $\Lambda^G$ in $\Lambda^{\sym{G}}$.

To obtain the group structure of $G$, we construct $\Lambda$ as a primitive extension of $\Lambda^{\sym{G}} \oplus \Lambda_{\sym{G}}$
by calculating the corresponding glue map.
Then we extend $g$ to an isometry $g\oplus h$ of $\Lambda$
as in Lemmas \ref{lem:extend} and \ref{lem:unique-extension}.
Here the coinvariant lattices $\Lambda_{\sym{G}}$ are obtained as sublattices of the Leech lattice as tabulated in \cite{hoehn:leech}
and the invariant lattices $\Lambda^{\sym{G}}$ are tabulated in \cite{hashimoto:symplectic}.
The gluings and extensions are carried out using the code developed by the first author for sageMath \cite{sage}.
We showcase the computation for \textbf{Nos.\ 70a, 70b, 70c} with a notebook in the ancillary files.
\end{proof}

\section{The classification}\label{sec:classification}
Using Proposition \ref{prop:class}, we are ready to state the details of the classification. The tables were produced using SageMath \cite{sage} and GAP \cite{GAP4}.
We denote by $\Z l=\Lambda^G$
the (primitive) invariant polarization of $\nsym{G}$. Then
$\nsym{G}:=\operatorname{Aut}(X,l)$ is the full projective automorphism group. The lattice $\syminv$ is the fixed lattice $H^2(X,\Z)^\sym{G}$ of a maximal symplectic action.
The entry ``glue'' denotes the index $[\syminv:T_X \oplus \Z l]$.
The GAP Id \cite{eick:small-groups} identifies a group up to isomorphism. If equations for the pair $(X,G)$ are known, then we write its identifier in bold.
We set $\zeta_n=\exp(2 \pi \sqrt{-1}/n)$, $\omega=\zeta_3$ and $i=\zeta_4$.

\begin{rem}
 We note that in all cases, except \textbf{62b} with $\Z l \cong (12)$, the exact sequence \[1 \rightarrow \sym{G} \rightarrow \nsym{G} \rightarrow \mu_n \rightarrow 1\]
 splits.
Namely, $G$ is a semidirect product of $\sym{G}$ and $\mu_n$.
\end{rem}

\subsection{No.\ 54} \label{subsect:T48}
$\sym{G}=T_{48}$.
We have $\SO(\syminv) \cong D_{6}$.
\begin{equation*}
\begin{array}{c|c|c|c|c|c|c}
  \hline \Tstrut
  \syminv & n & T_X & \Z l & \text{glue} & \text{GAP Id} & \text{case} \\ \hline

  & 6 & \begin{pmatrix} 16 &  8 \\  8 & 16 \end{pmatrix} & (2) & 1 & [288,900] 
& \textbf{54a} \\
  \smash{\begin{pmatrix}
            2 & 0 & 0 \\
            0 & 16 & 8 \\
            0 & 8 & 16
  \end{pmatrix}}
   & 2 & \begin{pmatrix}  2 &  0 \\  0 & 48 \end{pmatrix} & (16) & 2 &[96, 193] & \text{54b} \\
  & 2 & \begin{pmatrix}  2 &  0 \\  0 &  16 \end{pmatrix} & (48) & 2 & [96,193] & \text{54c} \\ \hline
\end{array}
\end{equation*}

A projective model of \textbf{54a} is given in \cite{mukai:symplectic}.
It is the double cover $X$ of $\P^2$ branched over the curve defined by
\begin{equation}
 xy(x^4+y^4)+z^6=0. 
\end{equation}
We have $\nsym{G}=G_s \times \mu_6$, where
 $\mu_6$ is generated by a lift to $X$ of
 $(x : y : z)\mapsto (x : y : \zeta_6 z)$.

\subsection{No.\ 62}
$\sym{G}=N_{72}$.
We have $\SO(\syminv) \cong D_4$.
\begin{equation*}
\begin{array}{c|c|c|c|c|c|c}
  \hline \Tstrut
  \syminv & n & T_X & \Z l & \text{glue} & \text{GAP Id} & \text{case} \\ \hline

  & 2 & \begin{pmatrix} 6 &  0 \\  0 & 36 \end{pmatrix} & (6) & 2 & [144,186] & \textbf{62a} \\
  \smash{\begin{pmatrix}
            6 & 0 & 3 \\
            0 & 6 & 3 \\
            3 & 3 & 12
  \end{pmatrix}}
   & 2 & \begin{pmatrix}  12 &  6 \\  6 & 12 \end{pmatrix} & (12) & 2 & [144, 182] & \text{62b} \\
  & 4 & \begin{pmatrix}  6 &  0 \\  0 &  6 \end{pmatrix} & (36) & 2 & [288, 841] & \textbf{62c} \\ \hline
\end{array}
\end{equation*}

A projective model of \textbf{62a} is given in \cite{mukai:symplectic}:
\begin{equation}
 x_1^3+x_2^3+x_3^3+x_4^3=x_1 x_2+x_3 x_4+x_5^2=0
 \quad\text{in}\quad \P^4.
\end{equation}
We have $\nsym{G}=\sym{G} \times \mu_2$, where
 $\mu_2$ is generated by
 $(x_1 : \cdots : x_4 : x_5) \mapsto (x_1 : \cdots : x_4 : -x_5)$.

 A projective model of \textbf{62c} in $\mathbb{P}^5$ together with a non-linear action of
 $N_{72} \times \mu_2$ and an invariant polarization of degree $36$ is given in \cite{mukai-ohashi:m12}:
 \[X=X_{\lambda,\mu}=
 \begin{cases}
 x_0^2 - \lambda x_1x_2 = y_0^2 - \mu y_1y_2 \\
 x_1^2 - \lambda x_0x_2 = y_1^2 - \mu y_0y_2 \\
 x_2^2 - \lambda x_0x_1 = y_2^2 - \mu y_0y_1 \\
 \end{cases} (\lambda = 1 + \sqrt{3}, \mu = 1-\sqrt{3})\]
 It remains to exhibit an element acting by a primitive $4$th root of unity on the $2$-form \cite{ohashi:private}:
 the variety is constructed using the partial derivatives of the the Hesse pencil
 \[z_0^3 + z_1^3 + z_2^3 - 3\kappa z_0 z_1 z_2.\]
 A linear map $g \in \PGL_3(\mathbb{C})$ preserving this pencil acts on the base as a M\"obius transformation, which is denoted by the same letter $g$.
 By \cite[Lemma 2.1]{mukai-ohashi:m12} it induces a morphism $\tilde{g} \colon X_{\lambda,\mu} \rightarrow X_{g(\lambda),g(\mu)}$. Take
\begin{equation*}
g=\left(\begin{matrix}
   1 & 1 & \omega^2 \\
   1 & \omega & \omega \\
   \omega  & 1& \omega
  \end{matrix}\right).\quad \mbox{Then }  \; g(\kappa)=\frac{\kappa+2\omega^2}{\omega \kappa-1}
\end{equation*}
and $g$ exchanges $\lambda$ and $\mu$.
We compose $\tilde{g}\colon X_{\lambda,\mu} \rightarrow X_{\mu,\lambda}$ with the map $\iota \colon X_{\mu,\lambda} \rightarrow X_{\lambda, \mu}$ exchanging the $x_i$ and $y_i$ to obtain the sought for automorphism of $X$,
which generates $\mu_4$ with $G=N_{72} \rtimes \mu_4$.
Note that $\mu_2$ above, which is generated by $y_i \mapsto -y_i$,
is not included in $\mu_4$.
Let $h$ be the class of hyperplane section and let $f_\infty$ be the class represented by the smooth elliptic curve of degree $6$ defined by
\begin{equation}
 \rank A \leq 1, \quad\text{where}\quad A:=
 \begin{pmatrix}
  \mu x_0 & x_2 + c y_2 & x_1 - c y_1 \\
  x_2 - c y_2 & \mu x_1 & x_0 + c y_0 \\
  x_1 + c y_1 & x_0 - c y_0 & \mu x_2 
 \end{pmatrix}, ~ c^2=1-\mu^2.
\end{equation}
Then we have $h,f_\infty \in \NS_X^H$, where $H \cong (C_3^2 \rtimes C_4) \rtimes \mu_4$ is the subgroup of $G$ consisting of all linear transformations.
Moreover, $3h-f_\infty$ is a $G$-invariant polarization of degree $36$ (see \cite{mukai-ohashi:m12} for details).

\subsection{No.\ 63}
$\sym{G}=M_9$.
We have $\SO(\syminv) \cong D_{6}$.
\begin{equation*}
\begin{array}{c|c|c|c|c|c|c}
  \hline \Tstrut
  \syminv & n & T_X & \Z l & \text{glue} & \text{GAP Id} & \text{case} \\ \hline

  & 6 & \begin{pmatrix} 12 &  6 \\  6 & 12 \end{pmatrix} & (2) & 1 & [432, 735] & \textbf{63a} \\
  \smash{\begin{pmatrix}
            2 & 0 & 0 \\
            0 & 12 & 6 \\
            0 & 6 & 12
  \end{pmatrix}}
   & 2 & \begin{pmatrix}  2 &  0 \\  0 & 36 \end{pmatrix} & (12) & 2 & [144,182] & \text{63b} \\
  & 2 & \begin{pmatrix}  2 &  0 \\  0 &  12 \end{pmatrix} & (36) & 2 & [144,182] & \text{63c} \\ \hline
\end{array}
\end{equation*}

A projective model of \textbf{63a} is given in \cite{mukai:symplectic}.
It is the double cover $X$ of $\P^2$ branched over the curve defined by
\begin{equation}
 x^6+y^6+z^6-10(x^3 y^3+y^3 z^3+z^3 x^3)=0.
\end{equation}
We have $\nsym{G}=\sym{G} \rtimes \mu_6$, where
 $\mu_6$ is generated by the covering transformation and
 a lift to $X$ of
 $(x : y : z) \mapsto (\zeta_3 x : y : z)$.

\subsection{No.\ 70}
$\sym{G}=\MF{S}_5$.
We have $\SO(\syminv) \cong D_2$ in both cases.
Since the center of $\sym{G}$ is trivial and there is no nontrivial outer-automorphism of $\sym{G}$, we have $\nsym{G}=\sym{G} \times \mu_2$ in each case.
\begin{equation*}
\begin{array}{c|c|c|c|c|c|c}
  \hline \Tstrut
  \syminv & n & T_X & \Z l & \text{glue} & \text{GAP Id} & \text{case} \\ \hline
  & 2 & \begin{pmatrix} 10 &  0 \\  0 & 20 \end{pmatrix} & (6) & 2 & [240,189] & \textbf{70a} \\
  \smash{\begin{pmatrix}
            4 & 1 & 0 \\
            1 & 4 & 0 \\
            0 & 0 & 20
  \end{pmatrix}}
   & 2 & \begin{pmatrix}  6 &  0 \\  0 & 20 \end{pmatrix} & (10) & 2 & [240,189] & \textbf{70b} \\
  & 2 & \begin{pmatrix}  4 &  1 \\  1 &  4 \end{pmatrix} & (20) & 1 & [240,189] & \textbf{70c} \\ \hline
  & 2 & \begin{pmatrix} 20 &  10 \\  10 & 20 \end{pmatrix} & (4) & 2 & [240,189] & \textbf{70d} \\
  \smash{\begin{pmatrix}
            4 & 2 & 2 \\
            2 & 6 & 1 \\
            2 & 1 & 16
  \end{pmatrix}}
   & 2 & \begin{pmatrix}  4 &  2 \\  2 & 16 \end{pmatrix} & (20) & 2 & [240,189] & \text{70e} \\
  & 2 & \begin{pmatrix}  4 &  2 \\  2 &  6 \end{pmatrix} & (60) & 2 & [240,189] & \textbf{70f} \\ \hline
\end{array}
\end{equation*}

A projective model of \textbf{70a} is given in \cite{mukai:symplectic}:
\begin{equation}
 \sum_{i=1}^5 x_i=\sum_{i=1}^6 x_i^2=\sum_{i=1}^5 x_i^3=0
 \quad\text{in}\quad \P^5,
\end{equation}
 where $G=\mathfrak{S}_5 \times \mu_2$ is generated by the permutations of $x_1,\ldots,x_5$ and
 $(x_1 : \cdots : x_5 : x_6) \mapsto (x_1 : \cdots : x_5 : -x_6)$.

A projective model of \textbf{70b} is given in \cite[Proposition 3.16]{frantzen:automorphisms}.
Let $X$ be the minimal resolution of the double cover of $\mathbb{P}^2$ branched over the curve $C$ defined by $f=0$, where
\begin{align*}
f &=
 2 ( x_1^4 x_2 x_3 + x_1 x_2^4 x_3 + x_1 x_2 x_3^4 ) 
 -2 ( x_1^4 x_2^2 + x_1^4 x_3^2 + x_1^2 x_2^4 + x_1^2 x_3^4 + x_2^4 x_3^2
  + x_2^2 x_3^4 ) \\
 & +2 ( x_1^3 x_2^3 + x_1^3 x_3^3 + x_2^3 x_3^3 )
  + ( x_1^3 x_2^2 x_3 + x_1^3 x_2 x_3^2 + x_1^2 x_2^3 x_3
  + x_1^2 x_2 x_3^3 + x_1 x_2^3 x_3^2 + x_1 x_2^2 x_3^3 ) \\
 & -6 x_1^2 x_2^2 x_3^2.
\end{align*}
Let $H \cong \mathfrak{S}_4$ be the subgroup of $\PGL_3(\mathbb{C})$ permuting the following four points:
\begin{equation}
 (1:0:0),~(0:1:0),~(0:0:1),~(1:1:1).
\end{equation}
Then the curve $C$ is preserved by $H$ and the following Cremona transformation (of order $5$):
\begin{equation}
 g \colon (x_1 : x_2 : x_3) \mapsto ( x_1 (x_3-x_2) : x_3 (x_1-x_2) : x_1 x_3 ).
\end{equation}
The actions of $H$ and $g$ lift to those on $X$.
The group $G \cong \mathfrak{S}_5 \times \mu_2$ is generated by
 $H$, $g$ and the covering transformation.
Note that $X/\mu_2$ is isomorphic to the Del Pezzo surface of degree $5$ (see \cite{frantzen:automorphisms} for details).
From this fact, we know the following:
consider the line defined by $x_1=0$ in $\mathbb{P}^2$ and let $R_1$ be the strict transform of its pull-back in $X$.
Then the orbit of $R_1$ (in $\NS_X$) under the action of $G$ consists of ten curves $R_i$ ($1 \leq i \leq 10$), whose dual configuration is the Petersen graph with $R_i.R_i=-2$ and $R_i.R_j \in \{ 0,2 \}$ for $i \neq j$ (see also \cite{frantzen:automorphisms}).
Hence we have
\begin{equation}
 R:=\sum_{i=1}^{10} R_i, \quad R.R=10 \cdot (-2) + 2 \cdot 15 \cdot 2=40.
\end{equation}
By construction $(X,G)$ is among the surfaces in the table. Further $R$ is invariant
under $G$. Thus $R$ must lie in $\Z l$. This is only possible if
$R = 2 l$ and $l^2=10$. Thus $(X,G)$ is a projective model of \textbf{70b}.

Projective models of \textbf{70c} are given in \cite[Type VI]{kondo:enriques} and
\cite{peters:melas-codes}.
We review the one in \cite{peters:melas-codes}.
Let $Y$ be the surface defined by the following equations:
\begin{equation}
 \sum_{i=1}^5 x_i=
 \sum_{i=1}^5 \frac{1}{x_i}=0
 \quad\text{in}\quad \P^4
\end{equation}
and let $X$ be the minimal resolution of $Y$, which is a K3 surface.
It follows from \cite[Thm 4.15]{hashimoto:family} that $X$ has the same transcendental lattice as \textbf{70c}.
The symmetric group $\MF{S}_5$ acts on $X$ by permutation of $x_i$.
Moreover, the involution
 $\epsilon \colon (x_i) \mapsto (1/x_i)$ acts on $X$.
%
The group $G \cong \MF{S}_5 \times \mu_2$ is generated by $\MF{S}_5$ and $\epsilon$.

A projective model of \textbf{70d} is given in \cite[Thm 4.15]{hashimoto:family}.
Consider $Q := \P^1 \times \P^1$ defined by
 $\sum_{i=1}^{5} x_i=\sum_{i=1}^{5} x_i^2=0$ in $\P^4$.
Let $X$ be the double cover of $Q$ branched over the curve defined by $\sum_{i=1}^{5} x_i^4=0$.
Then $X$ is a K3 surface and $G$ is generated by the permutations of $x_i$ and the covering transformation.
This projective model is a (degenerate) member of the family XIII in \cite{smith}.
The pull-back to $Q$ of the hyperplane class $h$ of $\P^4$ is of bidegree $(1,1)$, which has self intersection $2$. Hence the pull-back to $X$ of $h$ has self intersection $2 \cdot 2=4$ and is invariant under $G$. In particular it must be equal to $l$, confirming that we are indeed in case \textbf{70d}.
Note that the linear system $|l|$ is hyperelliptic.

Projective models of \textbf{70f} are given in \cite[Type VII]{kondo:enriques} and \cite[1.2]{mukai-ohashi:m12}.
We review the one in \cite{mukai-ohashi:m12},
 which is given as the minimal resolution of the singular surface
\[\overline{X}\colon \quad \sum_{1\leq i < j \leq 5} x_ix_j = \sum_{1\leq i < j \leq 5} \frac{1}{x_ix_j}=0\]
in $\P^4$. The symmetric group $\MF{S}_5$ acts (non-symplectically) by permutations of the coordinates and a non-symplectic involution is given by the Enriques involution $\epsilon \colon (x_i) \mapsto (1/x_i)$.
The surface $\overline{X}$ has five $A_1$ singularities in the coordinate points. Denote by $E_i$ the corresponding exceptional divisors on $X$. Let $R_i$ be the strict transform of $\overline{X} \cap \{x_i=0\}$. Then the intersection numbers are given by $R_i.C_j=2(1-\delta_{ij})$.
An invariant polarization of degree $60$ is given by $h=\sum_{i=1}^5 (E_i+R_i)$.

\subsection{No.\ 74}
$\sym{G}=L_2(7)$.
We have $\SO(\syminv) \cong D_2$, $D_4$, respectively, as follows.
\begin{equation*}
\begin{array}{c|c|c|c|c|c|c}
  \hline \Tstrut
  \syminv & n & T_X & \Z l & \text{glue} & \text{GAP Id} & \text{case} \\ \hline
  & 2 & \begin{pmatrix} 14 &  0 \\  0 & 28 \end{pmatrix} & (2) & 2 & [336,209] & \textbf{74a} \\
  \smash{\begin{pmatrix}
            2 & 1 & 0 \\
            1 & 4 & 0 \\
            0 & 0 & 28
  \end{pmatrix}}
   & 2 & \begin{pmatrix}  2 &  0 \\  0 & 28 \end{pmatrix} & (14) & 2 & [336,208] & \text{74b} \\
  & 2 & \begin{pmatrix}  2 &  1 \\  1 &  4 \end{pmatrix} & (28) & 1 & [336,208] & \textbf{74c} \\ \hline
  & 4 & \begin{pmatrix} 14 &  0 \\  0 & 14 \end{pmatrix} & (4) & 2 & [672,1046] & \textbf{74d} \\
  \smash{\begin{pmatrix}
            4 & 2 & 2 \\
            2 & 8 & 1 \\
            2 & 1 & 8
  \end{pmatrix}}
   & 2 & \begin{pmatrix}  4 &  0 \\  0 & 14 \end{pmatrix} & (14) & 2 & [336,208] & \text{74e} \\
  & 2 & \begin{pmatrix}  4 &  2 \\  2 &  8 \end{pmatrix} & (28) & 2 &[336,208] & \text{74f} \\ \hline
\end{array}
\end{equation*}

A projective model of \textbf{74a} is given in \cite{oguiso:168-k3}.
It is the double cover of $\mathbb{P}^2$ branched over the Hessian of the Klein curve defined by
\begin{equation}
 x^5 z + y^5 x + z^5 y - 5 x^2 y^2 z^2=0.
\end{equation}
We have $\nsym{G}=L_2(7) \times \mu_2$, where $\mu_2$ is generated by the covering transformation.

A projective model of \textbf{74c} is given in \cite{ujikawa:discr7} and \cite[Appendix]{artebani_sarti_taki:non-symplectic}.
It is the universal elliptic curve over $X_1(7)$.
We have $\nsym{G}=PGL_2(\mathbb{F}_7)=L_2(7) \rtimes \mu_2$.

A projective model of \textbf{74d} is given in \cite{mukai:symplectic}.
Using the Klein curve with $L_2(7)$, it is defined by
\begin{equation}
 x^3 y+y^3 z+z^3 x+w^4=0 \quad\text{in}\quad \P^3.
\end{equation}
We have $\nsym{G}=L_2(7) \times \mu_4$, where $\mu_4$ is generated by
 $(x : y : z : w) \mapsto (x : y : z : i w)$.

\subsection{No.\ 76}
$\sym{G}=H_{192}$.
We have $\SO(\syminv) \cong D_2$.
\begin{equation*}
\begin{array}{c|c|c|c|c|c|c}
  \hline \Tstrut
  \syminv & n & T_X & \Z l & \text{glue} & \text{GAP Id} & \text{case} \\ \hline

  & 2 & \begin{pmatrix} 8 &  0 \\  0 & 12 \end{pmatrix} & (4) & 1 &[384,17948] & \textbf{76a} \\
  \smash{\begin{pmatrix}
            4 & 0 & 0 \\
            0 & 8 & 0 \\
            0 & 0 & 12
  \end{pmatrix}}
   & 2 & \begin{pmatrix}  4 &  0 \\  0 & 12 \end{pmatrix} & (8) & 1 &[384,17948] & \textbf{76b} \\
  & 2 & \begin{pmatrix}  4 &  0 \\  0 &  8 \end{pmatrix} & (12) & 1 &[384,17948] & \textbf{76c} \\ \hline
\end{array}
\end{equation*}

A projective model of \textbf{76a} is given as follows
(this is a (degenerate) member of the family XI in \cite{smith}):
consider $Q \cong \P^1 \times \P^1$ defined by
$\sum_{i=1}^{4} x_i^2=0$ in $\P^3$.
Let $X$ be the double cover of $Q$ branched over the curve defined by $\sum_{i=1}^{4} x_i^4=0$.
Then $X$ is a K3 surface and $G=H_{192} \times \mu_2$ is generated by the permutations of $x_i$, the sign changes of $x_i$, and the covering transformation.
Similarly to \textbf{70d}, we have $l^2=4$.
Namely, since the pull-back to $X$ of the hyperplane class of $\P^3$ has self intersection $4$ and is invariant under $G$,
the pair $(X,G)$ is case \textbf{76a}.

A projective model of \textbf{76b} is given in \cite{mukai:symplectic}:
\begin{equation}
 x_1^2+x_3^2+x_5^2=x_2^2+x_4^2+x_6^2, \quad
 x_1^2+x_4^2=x_2^2+x_5^2=x_3^2+x_6^2 \quad\text{in}\quad \P^5.
\end{equation}
We have $\nsym{G}=H_{192} \times \mu_2$, where $\mu_2$ is generated by
 $(x_1 : \cdots : x_6) \mapsto (-x_1 : x_2 : -x_3 : x_4 : -x_5 : x_6)$.

A projective model of \textbf{76c} is given as a smooth surface $X$ of tri-degree $(2,2,2)$ in $\P^1 \times \P^1 \times \P^1$ \cite[Example 6]{mukai-ohashi:m12}:
\begin{equation}
 v^2w^2 + u^2w^2+u^2v^2 + 1 + i \left(u^2+v^2+w^2 +u^2v^2w^2\right)=0
\end{equation}
where $(u,v,w)$ are affine coordinates on $(\P^1)^3$.
The surface $X$ admits the following linear actions: the permutations of $(u,v,w)$, $(u,v,w) \mapsto (\pm u, \pm v, \pm w)$, and $(u,v,w) \mapsto (iu,iv,i/w)$.
Those linear actions generate $G=H_{192} \times \mu_2$. Here $\mu_2$ is generated by $(u,v,w) \mapsto (-u,-v,-w)$. The invariant polarization of degree $12$ is given by $\mathcal{O}_X(1,1,1)$.

\subsection{No.\ 77}
$\sym{G}=T_{192}$.
We have $\SO(\syminv) \cong D_{6}$.
\begin{equation*}
\begin{array}{c|c|c|c|c|c|c}
  \hline \Tstrut
  \syminv & n & T_X & \Z l & \text{glue} & \text{GAP Id} & \text{case} \\ \hline

  & 6 & \begin{pmatrix} 8 &  4 \\  4 & 8 \end{pmatrix} & (4) & 1 &[1152,157515] & \textbf{77a} \\
  \smash{\begin{pmatrix}
            4 & 0 & 0 \\
            0 & 8 & 4 \\
            0 & 4 & 8
  \end{pmatrix}}
   & 2 & \begin{pmatrix}  4 &  0 \\  0 & 24 \end{pmatrix} & (8) & 2 & [384,5602] & \text{77b} \\
  & 2 & \begin{pmatrix}  4 &  0 \\  0 &  8 \end{pmatrix} & (24) & 2 & [384,5608] & \text{77c} \\ \hline
\end{array}
\end{equation*}

A projective model of \textbf{77a} is given in \cite{mukai:symplectic}:
\begin{equation}
 x^4+y^4+z^4+w^4-2\sqrt{-3}(x^2 y^2+z^2 w^2)=0 \quad\text{in}\quad \P^3.
\end{equation}
We have $\nsym{G}=(T_{24}*T_{24})\rtimes \langle \tau,\sigma \rangle=T_{192} \rtimes \mu_6$, where
 $*$ denotes central product,
 $T_{24}$ the binary tetrahedral group,
 $\tau$ the involution interchanging two copies of $T_{24}$ and $\sigma$ switches the sign of $x$.

\subsection{No.\ 78}
$\sym{G}=\MF{A}_{4,4}$.
We have $\SO(\syminv) \cong D_4$.
\begin{equation*}
\begin{array}{c|c|c|c|c|c|c}
  \hline \Tstrut
  \syminv & n & T_X & \Z l & \text{glue} & \text{GAP Id} & \text{case} \\ \hline

  & 4 & \begin{pmatrix} 12 &  0 \\  0 & 12 \end{pmatrix} & (8) & 2 & [1152, 157850] & \textbf{78a} \\
  \smash{\begin{pmatrix}
            8 & 4 & 4 \\
            4 & 8 & 2 \\
            4 & 2 & 8
  \end{pmatrix}}
   & 2 & \begin{pmatrix}  8 &  0 \\  0 & 12 \end{pmatrix} & (12) & 2 &[576, 8654] & \text{78b} \\
  & 2 & \begin{pmatrix}  8 &  4 \\  4 &  8 \end{pmatrix} & (24) & 2 &[576, 8653] & \text{78c} \\ \hline
\end{array}
\end{equation*}
A projective model of \textbf{78a} is given in \cite{mukai:symplectic}:
\begin{equation}
 \begin{pmatrix}
  1 & 1 & 1 \\
  1 & \omega & \omega^2 \\
  1 & \omega^2 & \omega
 \end{pmatrix}
 \begin{pmatrix} x^2 \\ y^2 \\ z^2 \end{pmatrix}
 =
 \sqrt{3}
 \begin{pmatrix} u^2 \\ v^2 \\ w^2 \end{pmatrix}
 \quad\text{in}\quad \P^5.
\end{equation}
We have $\nsym{G}=\MF{A}_{4,4} \rtimes \mu_4$, where $\mu_4$ is generated by
$(x : y : z : u : v : w)\mapsto(u : v : w : x : z : y)$.

\subsection{No.\ 79}
$\sym{G}=\MF{A}_6$.
We have $\SO(\syminv) \cong D_2$, $D_4$, respectively, as follows.

\begin{equation*}
\begin{array}{c|c|c|c|c|c|c}
  \hline \Tstrut
  \syminv & n & T_X & \Z l & \text{glue} & \text{GAP Id} & \text{case} \\ \hline
  & 2 & \begin{pmatrix} 12 &  0 \\  0 & 30 \end{pmatrix} & (2) & 2 &[720, 766] & \textbf{79a} \\
  \smash{\begin{pmatrix}
            2 & 1 & 0 \\
            1 & 8 & 0 \\
            0 & 0 & 12
  \end{pmatrix}}
   & 2 & \begin{pmatrix}  2 &  1 \\  1 & 8 \end{pmatrix} & (12) & 1& [720, 764] & \text{79b} \\
  & 2 & \begin{pmatrix}  2 &  0 \\  0 &  12 \end{pmatrix} & (30) & 2 &[720, 764] & \text{79c} \\ \hline
  & 2 & \begin{pmatrix} 6 &  0 \\  0 & 20 \end{pmatrix} & (6) & 2 & [ 720, 763] & \textbf{79d} \\
  \smash{\begin{pmatrix}
            6 & 0 & 3 \\
            0 & 6 & 3 \\
            3 & 3 & 8
  \end{pmatrix}}
   & 2 & \begin{pmatrix}  8 &  2 \\  2 & 8 \end{pmatrix} & (12) & 2 & [ 720, 764 ] & \text{79e} \\
  & 4 & \begin{pmatrix}  6 &  0 \\  0 &  6 \end{pmatrix} & (20) & 2 & [ 1440, 4595 ] & \textbf{79f} \\ \hline
\end{array}
\end{equation*}

A projective model $X$ of \textbf{79a} is given as follows (see also \cite[p.14 (L), p.18]{smith}).
Consider the invariant curve of degree $6$ by the Valentiner group in
 $\GL_3(\C)$, which is defined in $\P^2$ by the following equation
 \cite{elkies}:
 \[10 x^3 y^3 + 9 (x^5+y^5)z - 45 x^2 y^2 z^2 - 135 xyz^4 +27 z^6=0.\]
The K3 surface $X$ is defined as the double cover branched over this curve.
We have $\nsym{G}=\MF{A}_6 \times \mu_2$, where $\mu_2$ is generated by the covering transformation.

A projective model $X$ of \textbf{79d} is given in \cite{mukai:symplectic}:
\begin{equation}
 \sum_{i=1}^6 x_i=\sum_{i=1}^6 x_i^2=\sum_{i=1}^6 x_i^3=0
 \quad\text{in}\quad \P^5.
\end{equation}
The symmetric group $\MF{S}_6$ of degree $6$ acts on $X$.
The hyperplane section of $X$ has self intersection $6$ and is invariant under $\MF{S}_6$.
Hence $l^2=6$ and $\nsym{G}=\MF{S}_6=\MF{A}_6 \rtimes \mu_2$.

In \textbf{79f}, $\nsym{G}/\mu_2=\sym{G} \rtimes (\mu_4/\mu_2)$ is isomorphic to $M_{10}$ \cite{oguiso_keum_zhang:alternating}.
A projective model, which is same as \textbf{62c}, together with a (non-linear) action by $\mathfrak{A}_6 \times \mu_2$ and an invariant polarization of degree $20$ is given in \cite{mukai-ohashi:m12}. The missing automorphism acting by a primitive $4$th root of unity is the one given in \textbf{62c}.
The invariant polarization is given by $h+f_\infty$ using the same notation as in \textbf{62c}.

The full groups of automorphisms for \textbf{79b}, \textbf{79c} and \textbf{79f} are calculated in \cite{shimada:alternating}.

\subsection{No. 80}
$\sym{G}=F_{384}$.
We have $\SO(\syminv) \cong D_4$.
\begin{equation*}
\begin{array}{c|c|c|c|c|c|c}
  \hline \Tstrut
  \syminv & n & T_X & \Z l & \text{glue} & \text{GAP Id} & \text{case} \\ \hline

  & 4 & \begin{pmatrix} 8 &  0 \\  0 & 8 \end{pmatrix} & (4) & 1 & [ 1536 , 408544807 ] & \textbf{80a} \\
  \smash{\begin{pmatrix}
            4 & 0 & 0 \\
            0 & 8 & 0 \\
            0 & 0 & 8
  \end{pmatrix}}
   & 2 & \begin{pmatrix}  4 &  0 \\  0 & 8 \end{pmatrix} & (8) & 1 &[ 768, 1090134 ] & \textbf{80b} \\
  & 2 & \begin{pmatrix}  4 &  0 \\  0 &  16 \end{pmatrix} & (16) & 2 & [ 768, 1086051 ] & \text{80c} \\ \hline
\end{array}
\end{equation*}

A projective model of \textbf{80a} is given in \cite{mukai:symplectic}:
\begin{equation}
 x^4+y^4+z^4+w^4=0 \quad\text{in}\quad \P^3.
\end{equation}
We have $\nsym{G}=F_{384} \rtimes \mu_4$, where
$\mu_4$ is generated by
 $(x : y : z : w) \mapsto (i x : y : z : w)$.

A projective model of \textbf{80b} \cite{bonnaf-sarti:personal} is given by
\begin{align}
   q_1&= \phantom{+}2 x_2^2 - x_3^2 - i x_4^2 + i x_5^2 - x_6^2=0\\
   q_2&= -x_1^2 - i x_2^2 - x_3^2 + i x_4^2 + 2 x_5^2 =0\\
   q_3&= -x_1^2 + i x_2^2         + 2 x_4^2 - i x_5^2 - x_6^2=0
\end{align}
with linear action generated by
\begin{equation}
 \left(\begin{smallmatrix}
-1 & 0 & 0 & 0 & 0 & 0 \\
0 & -j^{2} & 0 & 0 & 0 & 0 \\
0 & 0 & 0 & 0 & j & 0 \\
0 & 0 & 0 & 0 & 0 & -j \\
0 & 0 & 0 & 1 & 0 & 0 \\
0 & 0 & j^{2} & 0 & 0 & 0
\end{smallmatrix}\right) \mbox{ and }
\left(\begin{smallmatrix}
0 & 0 & 0 & 0 & 1 & 0 \\
0 & 0 & 0 & 0 & 0 & 1 \\
0 & 0 & 0 & 1 & 0 & 0 \\
0 & 0 & 1 & 0 & 0 & 0 \\
-1 & 0 & 0 & 0 & 0 & 0 \\
0 & 1 & 0 & 0 & 0 & 0
\end{smallmatrix}\right)
\end{equation}
where $j^2 = i$.

\subsection{No.\ 81}
$\sym{G}=M_{20}$.
We have $\SO(\syminv) \cong D_4$.
\begin{equation*}
\begin{array}{c|c|c|c|c|c|c}
  \hline \Tstrut
  \syminv & n & T_X & \Z l & \text{glue} & \text{GAP Id} & \text{case} \\ \hline

  & 2 & \begin{pmatrix} 4 &  0 \\  0 & 40 \end{pmatrix} & (4) & 2  &[ 1920, 240993 ] & \textbf{81a} \\
  \smash{\begin{pmatrix}
            4 & 0 & 2 \\
            0 & 4 & 2 \\
            2 & 2 & 12
  \end{pmatrix}}
   & 2 & \begin{pmatrix}  8 &  4 \\  4 & 12 \end{pmatrix} & (8) & 2 & [ 1920, 240995 ] & \textbf{81b} \\
  & 4 & \begin{pmatrix}  4 &  0 \\  0 &  4 \end{pmatrix} & (40) & 2 & - & \textbf{81c} \\ \hline
\end{array}
\end{equation*}

A projective model of \textbf{81a} is given in \cite{mukai:symplectic}:
\begin{equation}
 x^4+y^4+z^4+w^4+12 xyzw=0 \quad\text{in}\quad \P^3.
\end{equation}
We have $\nsym{G}=M_{20} \rtimes \mu_2$, where $\mu_2$ is generated by
 $(x : y : z : w) \mapsto (y : x : z : w)$.

A projective model of \textbf{81b} is given in \cite{bonnaf-sarti:maximal}:
\begin{align}
q_1 &= x_1^2 - x_4^2 -\phi x_5^2 +  \phi x_6^2=0,\\
q_2 &= x_2^2 + \phi x_4^2 + x_5^2 -\phi x_6^2=0,\\
q_3 &= x_3^2 - \phi x_4^2 - \phi x_5^2  + x_6^2=0,
\end{align}
where $\phi = (1+\sqrt{5})/2$ is the golden ratio.
The group $\nsym{G}=\sym{G} \rtimes \mu_2$ is generated by
\[
\left(\begin{smallmatrix}
-1  &  0  &  0  &  0  &  0  &  0 \\
0  &  1  &  0  &  0  &  0  &  0\\
0  &  0  &  1  &  0  &  0  &  0\\
0  &  0  &  0  &  1  &  0  &  0\\
0  &  0  &  0  &  0  &  1  &  0\\
0  &  0  &  0  &  0  &  0  &  1
\end{smallmatrix}\right),
\left(\begin{smallmatrix}
i  &  0  &  0  &  0  &  0  &  0 \\
0  &  0  &  1  &  0  &  0  &  0\\
0  &  1  &  0  &  0  &  0  &  0\\
0  &  0  &  0  & -i  &  0  &  0\\
0  &  0  &  0  &  0  &  0  &  1\\
0  &  0  &  0  &  0  &  1  &  0
\end{smallmatrix}\right),
\left(\begin{smallmatrix}
0 & 1 & 0 & 0 & 0 & 0\\
0 & 0 & 0 & 1 & 0 & 0\\
0 & 0 & 0 & 0 & 1 & 0\\
1 & 0 & 0 & 0 & 0 & 0\\
0 & 0 & 0 & 0 & 0 & 1\\
0 & 0 & 1 & 0 & 0 & 0
\end{smallmatrix}\right).\]

In \textbf{81c}, the group $G$ has the maximal finite order $960$. Its existence is proven in \cite{kondo:maximal} by applying lattice theory. For equations of \textbf{81c}, see Section \ref{sect:maximal}. The full automorphism group over $\C$ is calculated in \cite{keum-kondo:autom}; see  \cite{shimada:elliptic-modular} for mixed characteristic.

\section{The group of maximal order}\label{sect:maximal}

In this section, we give a projective model of \textbf{81c}.
Let $Y$ be the surface in $\mathbb{P}^5$ defined by the following equations:
\begin{equation} \label{eq:degree8}
\begin{array}{cccccccc}
 f_1= & x_1^2 & +x_2^2 & +x_3^2 & -x_4^2 & & & =0, \\
 f_2= & & & ~\,\; x_3^2 & +x_4^2 & +x_5^2 & -x_6^2  & =0, \\
 f_3= & x_1^2 & -x_2^2 & & & +x_5^2 & +x_6^2 & =0.
\end{array}
\end{equation}
The surface $Y$ has a linear action of $H_s=C_2^4 \rtimes \MF{A}_4$,
 which is generated by
\begin{align}
%
 (x_1 : \cdots : x_6) & \mapsto (x_1 : x_2 : i x_4 : i x_3 : x_6 : x_5), \\
 (x_1 : \cdots : x_6) & \mapsto (-x_1 : x_2 : -x_3 : x_4 : x_5 : x_6),\\
 (x_1 : \cdots : x_6) & \mapsto (x_3 : x_4 : x_5 : x_6 : x_1 : x_2).
\end{align}
Moreover, $Y$ has an automorphism $h$ of order $4$:
\begin{equation}
 h \colon (x_1 : \cdots : x_6) \mapsto (x_1 : i x_2 : x_5 : i x_6 : x_3 : i x_4).
\end{equation}
There are $16$ singular points $p_1,\ldots,p_{16}$ of $Y$,
 e.g.\ $(0:1:0:1:0:1) \in \mathbb{P}^5$.
They form one orbit under the $H_s$-action and each of them is of type $A_1$.
Let $\pi \colon X \rightarrow Y$ be the minimal resolution.
Then $X$ is a K3 surface.
The induced action of $H_s$ on $X$
 is symplectic and we have $h^* \omega_X=i \omega_X$.
Let $l' \in \NS_X$ denote the pull-back of the class of hyperplane section of $Y$.
Furthermore, let $d \in \NS_X$
 denote the sum of the classes of the $16$ exceptional curves of $\pi$.
Then we have
\begin{equation}
 l:=3l'-d, \quad l^2=9 \cdot l'^2+d^2=9 \cdot 8+16 \cdot (-2)=40,
\end{equation}
and
\begin{gather}
 \H^0(X,l) \cong \{ s \in R_3 
  \bigm| s(p_1)=\cdots=s(p_{16})=0 \} / I_3
  \subset R_3/I_3 \cong \H^0(X,3 l'),
\end{gather}
where $R_3$ and $I_3$ are the homogeneous parts of degree $3$ of $R := \mathbb{C}[x_1,\ldots,x_6]$ and the defining ideal $I$ of $Y$, respectively.
(Hence $I_3$ is spanned by $x_i f_j$ for $1 \leq i \leq 6$ and $1 \leq j \leq 3$.)
We take the following basis of $\H^0(X,l) \cong \C^{22}$:
\begin{align*}
 (z_1,\ldots,z_{22})
  &=(x_1 x_2 x_3,x_1 x_2 x_4,x_3 x_4 x_5,x_3 x_4 x_6,x_1 x_5 x_6,x_2 x_5 x_6, \notag \\
  & \quad~~ x_1 x_2 x_5,x_1 x_2 x_6,x_1 x_3 x_4,x_2 x_3 x_4,x_3 x_5 x_6,x_4 x_5 x_6, \\
  & \quad~~ x_1^2 x_2,x_1 x_2^2,x_3^2 x_4,x_3 x_4^2,x_5^2 x_6,x_5 x_6^2,
  x_1 x_3 x_5,x_1 x_4 x_6,x_2 x_3 x_6,x_2 x_4 x_5). \notag
\end{align*}
(The Riemann--Roch theorem also implies $\dim \H^0(X,l)=l^2/2+2=22$.)
The complete linear system for $l$
 gives a smooth embedding of $X$ into $\mathbb{P}^{21}$
 with coordinates $z_1,\ldots,z_{22}$.
Moreover, the coordinates $z_1,\ldots,z_6$ define a non-normal model $\overline{X}$ of
 $X$ in $\mathbb{P}^5$.
By using {\sc Singular} \cite{singular-cas}, one can check the following:
the defining ideal of $\overline{X}$ is generated by $(\overline{g}^i)^* q$ for $0 \leq i \leq 4$, where
\begin{gather}
 q := (-z_1^2+z_2^2-z_3^2-z_4^2) z_5^2+(z_1^2-z_2^2-z_3^2-z_4^2) z_6^2
 +z_5^4-z_6^4, \\
 \overline{g} \colon (z_1,\ldots,z_{6}) \mapsto
  (-i z_{2},-z_{3},-z_{5},-i z_{1},-z_{4},z_{6}).
\end{gather}
The automorphism of $\overline{X}$ (of order $5$) induced by $\overline{g}$ is extended to an automorphism $g$ of $X$ embedded into $\mathbb{P}^{21}$, where $g$ is given by
\begin{align}
 g \colon (z_1 : \cdots : z_{22}) \mapsto & (-i z_{2} : -z_{3} : -z_{5} : -i z_{1} : -z_{4} : z_{6} :  \notag \\ & ~~ i z_{12} : -z_{10} : z_{9} : -z_{7} : -z_{8} : i z_{11} :  \notag \\ & ~~ -i z_{16} : i z_{22} : z_{13} : -i z_{19} : z_{18} : i z_{21} :  -i z_{20} : -i z_{15} : -z_{14} : z_{17}).
\end{align}
By a direct computation, one can check that the following Cremona transformation acts on $Y$ and induces $g$:
\begin{equation}
 (x_1 : \cdots : x_6) \mapsto (x_3 x_4  :  -x_2 x_5  :  x_1 x_2  :  -i x_3 x_5  :  -x_5 x_6  :  -i x_2 x_3).
\end{equation}

The group $G_s$ of all symplectic automorphisms of $X$ with polarization $l$
 is generated by $H_s$ and $g$.
We have $G_s \cong M_{20}$.
The group $G \cong M_{20} \rtimes \mu_{4}$ of all automorphisms of $X$ with polarization $l$ is generated by $G_s$ and $h$.

\begin{rem}
The motivation for this construction is as follows.
Let $X$ be a K3 surface with an action of $G=M_{20} \rtimes \mu_4$
 as in \textbf{81c}.
We consider a maximal proper subgroup $\sym{H}$ of $\sym{G}=M_{20}$ isomorphic to $2^4.\MF{A}_4$.
From \cite[Table 10.3]{hashimoto:symplectic}, we get $\rank \Lambda^\sym{H}=4$ and the genus symbol of $\Lambda^\sym{H}$ is $2^{-2}_{\II},8^{-2}_2$.
(In \cite{hashimoto:symplectic}, $H_s$ is No.\ 75 and its structure is written as $4^2\MF{A}_4$.)
Consider the lattice $L$ of rank $4$ with basis $(b_1,\ldots,b_4)$ and Gram matrix
\begin{equation}
 \begin{pmatrix}
            4 & 0 & 0 & 0 \\
            0 & 4 & 0 & 0 \\
            0 & 0 & 8 & 0 \\
            0 & 0 & 0 & -8
  \end{pmatrix}.
\end{equation}
The lattice $\overline{L}$ generated by $L$ and $(b_1+b_2+b_3)/2$
 is isomorphic to $\Lambda^\sym{H}$.
Consider an isometry of $h$ defined by
 $(b_1,\ldots,b_4)\mapsto(b_2,-b_1,b_3,b_4)$,
 which extends to $\overline{L}$.
By a lattice-theoretic argument, it follows that
 the action of $\mu_4$ on $\Lambda^\sym{H}$ corresponds to $\langle h \rangle$ and there is an ample class $l'$ of degree $8$ giving rise to $b_3$.
This suggests that there is a complete intersection of type $(2,2,2)$ in $\P^5$ birational to $X$,
 which is nothing but $Y$ above.
The classes $l$ and $d$ correspond to $3b_3-2b_4$ and $2b_4$, respectively.
Indeed, calculating the orthogonal complement of $l'$ inside the N\'eron-Severi lattice one finds $16$ vectors (up to sign) of square $(-2)$. These give the $16$ singular points of type $A_1$ of $Y$ and their sum is $2b_4$.
\end{rem}

\bibliography{maximal-symplectic.bib}{}
\bibliographystyle{hplain.bst}

\par\noindent
 Fakult\"at f\"ur Mathematik und Informatik, Universit\"at des Saarlandes,\\
 Campus E2.4, 66123 Saarbr\"ucken, Germany
\par\noindent{\ttfamily brandhorst@math.uni-sb.de}
\\

\par\noindent
Graduate School of Mathematical Sciences, The University of Tokyo,\\
3-8-1 Komaba, Maguro-ku, Tokyo, 153-8914, Japan
\par\noindent{\ttfamily kenji.hashimoto.math@gmail.com}

\end{document}